\newtheorem{thm}{Theorem}[section]
\newtheorem{lem}[thm]{Lemma}
\newtheorem{cor}[thm]{Corollary}
\newtheorem{example}[thm]{Example}
\title{Planar graphs without pairwise adjacent 3-,4-,5-, and 6-cycle are 4-choosable}
\author{Pongpat Sittitrai \hskip 0.8in Kittikorn Nakprasit}
\date{}
\begin{document}

\maketitle

\begin{center}{\bf Abstract}\end{center}
\indent\indent
Xu and Wu proved that if every $5$-cycle of a planar graph $G$ 
is not simultaneously adjacent to $3$-cycles and $4$-cycles, 
then $G$ is $4$-choosable. In this paper, we improve this result as follows. 
If $G$ is a planar graph without pairwise adjacent $3$-,$4$-,$5$-, and $6-$cycle, 
then $G$ is $4$-choosable.   

\section{Introduction}
\indent Every graph in this paper is finite, simple, and undirected. 
The concept of choosability was introduced by Vizing in 1976 \cite{Vizing} 
and by Erd\H os, Rubin, and Taylor in 1979 \cite{Erdos}, independently. 
A $k$-\emph{assignment} $L$ of a graph $G$ assigns a list $L(v)$ 
(a set of colors) with $|L(v)|= k$ to each vertex $v.$ 
A graph $G$ is $L$-\emph{colorable} if there is a proper coloring $f$ where $f(v)\in L(v).$   
If $G$ is $L$-colorable for any $k$-assignment $L,$ then we say  $G$ 
is $k$-\emph{choosable}. 

It is known that every planar graphs is $4$-colorable \cite{app1, app2}.  
Thomassen \cite{Tho} proved that every planar graph is $5$-choosable. 
Meanwhile, Voight \cite{Vo1} presented an example of non $4$-choosable planar graph. 
Additionally, Gutner \cite{Gut} showed that determining 
whether a given planar graph $4$-choosable is NP-hard. 
Since every planar graph without $3$-cycle always has a vertex of 
degree at most $3,$ it is $4$-choosable. 
More  conditions for  a planar graph to be $4$-choosable are investigated. 
It is shown that a planar graph is $4$-choosable if it has no  
$4$-cycles \cite{Lam2}, $5$-cycles \cite{Wang1}, $6$-cycles \cite{Fi}, 
$7$-cycles \cite{Far}, intersecting $3$-cycles \cite{Wang2}, 
intersecting $5$-cycles \cite{Hu},  or 
$3$-cycles adjacent to $4$-cycles \cite{Bo, Cheng}. 
Xu and Wu  \cite{Xu} proved that if every $5$-cycle of a planar graph $G$ 
is not simultaneously adjacent to $3$-cycles and $4$-cycles, 
then $G$ is $4$-choosable. In this paper, we improve this result as follows. 

\begin{thm}\label{main}
	If $G$ is a planar graph without pairwise adjacent $3$-,$4$-,$5$-, and $6-$cycle, 
	then $G$ is $4$-choosable.   
\end{thm}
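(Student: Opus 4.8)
The plan is to argue by contradiction via the discharging method. Suppose the theorem fails and let $(G,L)$ be a counterexample minimizing $|V(G)|$: thus $G$ is a plane graph satisfying the hypothesis, $L$ is a $4$-assignment, $G$ is not $L$-colorable, but every proper subgraph of $G$ is. Standard arguments show $G$ is $2$-connected (otherwise color the blocks separately and glue), so every face is bounded by a cycle, and a $3$-, $4$-, $5$-, or $6$-face is exactly a cycle of that length. The first task is to establish structural restrictions on $G$ by proving that certain local configurations are \emph{reducible}, i.e.\ cannot occur in such a minimal counterexample.

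The simplest reducible configuration is a vertex of degree at most $3$: if $d(v)\le 3$, an $L$-coloring of $G-v$ leaves an admissible color for $v$, so $\delta(G)\ge 4$. To rule out richer configurations of $4$-vertices and clustered small faces, I would use the usual list-coloring extension technique: remove a small set $S$, $L$-color $G-S$ by minimality, and show the residual lists on $S$ (each of size $4$ minus the number of already-colored neighbors) always admit a completion. The main tool here is the Erd\H{o}s--Rubin--Taylor characterization, namely that a connected graph is degree-choosable unless every block is a clique or an odd cycle; this lets me discharge many configurations by exhibiting a subgraph whose residual list sizes meet or exceed its degrees and which is not such a ``Gallai tree.'' The goal is a list of forbidden configurations strong enough to feed the discharging, for instance that triangles cannot cluster too densely and that light $4$-vertices incident with several $3$-faces must be surrounded by larger faces.

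For the global contradiction I assign to each vertex $v$ the charge $d(v)-4$ and to each face $f$ the charge $d(f)-4$. By Euler's formula $|V|-|E|+|F|=2$ together with $\sum_v d(v)=\sum_f d(f)=2|E|$, the total initial charge equals $-8<0$. I then design discharging rules that move charge from vertices and faces of large degree (which start with surplus) to $3$-faces (charge $-1$) and to the $4$-vertices starved by incident triangles, and I verify that after redistribution every vertex and every face is left with nonnegative charge. Summing the final charges yields a nonnegative total, contradicting $-8$ and completing the proof.

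The hypothesis enters precisely in this verification: the absence of pairwise adjacent $3$-,$4$-,$5$-,$6$-cycles caps how densely a small face can be encircled by other small faces, which bounds the number of simultaneous charge demands on any one face or vertex and guarantees the surplus suffices. I expect the main obstacle to be exactly the local charge-balancing around $5$- and $6$-faces adjacent to triangles: one must enumerate the possible arrangements of faces of sizes $3,4,5,6$ around a given vertex or face, discard the bad arrangements using the reducibility lemmas, and invoke the forbidden-configuration hypothesis to eliminate every remaining case in which charge would run short. Making the discharging rules and the reducible-configuration list fit together—so that no vertex or face is left negative—is the crux and will absorb the bulk of the case analysis.
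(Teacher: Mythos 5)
Your outline follows the same broad template as the paper (minimal counterexample, reducible configurations via residual lists, discharging), but it is missing the one idea the whole argument hinges on: a strengthened induction hypothesis. The paper does not induct on ``$G$ is $4$-choosable'' but on ``every precoloring of a $3$-cycle extends to an $L$-coloring'' (Theorem~\ref{strong}). This is what lets it cut along a separating $3$-cycle $C$: color $V(C)\cup int(C)$ first, then extend the now-precolored $C$ to $ext(C)$. With plain vertex-minimality you cannot do this --- minimality hands you \emph{some} $L$-coloring of the outer piece, not one agreeing with the colors already forced on $C$, and since lists differ from vertex to vertex you cannot permute colors to fix the mismatch. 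The same objection already defeats your one-line ``color the blocks separately and glue'': gluing at a cut vertex is a precoloring-extension problem, not a choosability problem. And this is not a cosmetic point: the exclusion of separating $3$-cycles (Lemma~\ref{separating}) is what guarantees that boundaries of $6^-$-faces are cycles and that adjacent small faces meet in exactly one edge (Lemma~\ref{config}); every reducibility lemma feeding the discharging (Lemmas~\ref{forbid2}, \ref{theta}, \ref{faces}) is proved for these clean configurations. Without the strengthened statement your structural analysis collapses at the first separating triangle.

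A second concrete problem is your choice of initial charge $d(v)-4$, $d(f)-4$. Under that normalization a $4$-vertex carries charge $0$, yet in this graph class the $4$-vertices are the essential donors: the hypothesis permits a $(4,4,4)$-triangle whose three neighboring faces are $5$-faces, and a $5$-face has surplus only $1$ under your scheme while possibly abutting several such triangles, so the triangles starve. The paper uses $\mu(v)=2d(v)-6$ and $\mu(f)=d(f)-6$ precisely so that each $4$-vertex has $+2$ to spend and each $3$-face can collect roughly $1$ from each incident vertex (rules (R1)--(R2)); the price is that $4$- and $5$-faces then also need income, which drives most of the case analysis (cases (10)--(11)) and forces the finer reducible configurations (flaw vertices, trios, $W_5$, Corollaries~\ref{flaw}--\ref{W5}). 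Your plan neither supplies a mechanism for feeding poor triangles under the $d-4$ normalization nor identifies the configurations (clusters of three or four mutually adjacent $3$-faces, poor $5$-faces) where the balance is genuinely tight. Finally, the Gallai-tree/degree-choosability theorem you invoke is not what the paper uses and is not obviously applicable: the residual lists on the deleted subgraphs here do not meet the degree condition, and the paper instead uses ad hoc greedy orderings seeded by one carefully chosen color together with Lemma~\ref{listcycle} on $2$-lists of cycles.
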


\section{Preliminaries}
\indent 
First, we introduce some notations and definitions. 

Let $G$ be a plane graph. We use $V(G),E(G),$ and $F(G)$ for the vertex set, the edge set, 
and the face set respectively.  
We use $B(f)$ to denote a boundary of a face $f.$ 
A \emph{wheel}  $W_n$ is an $n$-vertex graph formed 
by connecting a single vertex (\emph{hub}) to all vertices (\emph{external vertices}) of an $(n-1)$-cycle.     
A $k$-vertex ($k^+$-vertex, $k^-$-vertex, respectively) is 
a vertex of degree $k$ (at least $k,$ at most $k,$ respectively). 
The same notations are applied to faces. 

A \emph{$(d_1,d_2,\dots,d_k)$-face} $f$ is a face of degree $k$ 
where  vertices on $f$ have degree $d_1,d_2,\dots,d_k$ in  a cyclic order. 
A \emph{$(d_1,d_2,\dots,d_k)$-vertex} $v$ is a vertex of degree $k$ 
where  faces incident to $v$ have degree $d_1,d_2,\dots,d_k$ in a cyclic order. 
Note that some face may appear more than one time in the order. 
A face is called  \emph{poor}, \emph{semi-rich}, and \emph{rich}, respectively   
if it is incident to no $5^+$-vertices, exactly one $5^+$-vertex, 
and at least two $5^+$-vertices, respectively. 
An \emph{extreme} face is a bounded face that shares a vertex with the unbounded face. 
An \emph{inner} face is a bounded face that is not an extreme face. 
A $(3,5,3,5^+)$-vertex $v$ is called a \emph{flaw $4$-vertex} 
if $v$ is incident to a poor $5$-face and all incident faces of $v$ are inner faces.  


We say $xy$ is a \emph{chord} in an embedding cycle $C$ if $x,y\in V(C)$ but $xy\in E(G)- E(C).$  
An \emph{internal chord} is a chord inside $C$
while  \emph{external chord} is a chord outside $C.$ 
A \emph{triangular chord} is a chord $e$ such that two edges in $C$ and $e$ form a $3$-cycle. 
A graph $C(m,n)$ is  obtained from  
a cycle $x_1x_2\ldots x_{m+n-2}$ with an internal chord $x_1x_m.$   
A graph $C(l,m,n)$ is  obtained from 
a cycle $x_1x_2\ldots x_{l+m+n-4}$ with internal chords $x_1x_l$ and $x_1x_{l+m-2}.$   
A graph $C(m,n,p,q)$ can be defined similarly. 
We use $int(C)$ and $ext(C)$ to denote the graphs induced by  
vertices inside and outside a cycle $C$, respectively. 
The cycle $C$ is a {\em separating cycle} if $int(C)$ and $ext(C)$ are not empty.


Let $L$ be a list assignment of $G$ and let $H$ be an induced subgraph of $G.$ 
Suppose $G - H$  has an $L$-coloring $\phi$ on $G-H$ where $L$ is restricted to $G-H.$ 
For a vertex $v \in H,$ let $L''(v)$ be a set of colors used on the neighbors of $v$ by $\phi.$   
We define a \emph{residual list assignment} $L'$ of $H$ by $L'(v) = L(v) - L''(v).$ 
One can see that if $G-H$ has an an $L$-coloring $\phi$ and $H$ has an $L'$-coloring, 
then $G$ has an $L$-coloring. 


The following is a fact on list colorings that we use later.

\begin{lem}\cite{Erdos}\label{listcycle} Let $L$ be a $2$-assignment. 
	A cycle $C_n$ is $L$-colorable if and only if 
	$n$ is even or $L$ does not assign the same list to all vertices.  
\end{lem}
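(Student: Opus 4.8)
The plan is to prove the two directions separately: necessity by a parity obstruction and sufficiency by a greedy coloring along the cycle after cutting it at a well-chosen edge. For necessity I argue the contrapositive. Suppose $n$ is odd and $L$ assigns the same pair $\{a,b\}$ to every vertex. Then any $L$-coloring of $C_n$ uses only $a$ and $b$ and is therefore a proper $2$-coloring; but an odd cycle is not bipartite, so no such coloring exists and $C_n$ is not $L$-colorable. This establishes that $L$-colorability forces $n$ to be even or the lists not to be all equal.

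For sufficiency the first regime is when the lists are not all identical. Here I would first observe that some edge of $C_n$ must have endpoints with distinct lists: if instead $L(v_i)=L(v_{i+1})$ for every $i$ (indices taken modulo $n$), then connectedness of the cycle would force all lists equal. Relabel the vertices $v_1,\dots,v_n$ so that this edge is $v_nv_1$, so that $L(v_1)\neq L(v_n)$. Since the two lists have size $2$ and differ, I can pick $c_1\in L(v_1)\setminus L(v_n)$ and set the color of $v_1$ to $c_1$. I then color $v_2,\dots,v_{n-1}$ in order, each time choosing a color of $v_j$ distinct from its single already-colored neighbor $v_{j-1}$; this is always possible since $|L(v_j)|=2$. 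For the final vertex $v_n$, which is adjacent to both $v_{n-1}$ and $v_1$, the crucial point is that $c_1\notin L(v_n)$, so the constraint from $v_1$ deletes nothing from $L(v_n)$ and only $v_{n-1}$ can forbid a color, leaving a valid choice. This yields a proper $L$-coloring for every parity of $n$.

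The remaining regime is when the lists are all equal but $n$ is even, the only situation not covered above; then $C_n$ is bipartite, and I simply color its two parts with the two colors of the common list alternately, the evenness of $n$ guaranteeing that the alternation is consistent around the cycle. The main obstacle, and the only step requiring real care, is the endpoint handling in the sufficiency argument: the relabeling and the choice $c_1\in L(v_1)\setminus L(v_n)$ are designed precisely so that the wrap-around constraint at $v_n$ becomes vacuous, which is what lets the greedy pass avoid getting stuck at the last vertex. The parity obstruction and the alternating $2$-coloring are routine by comparison.
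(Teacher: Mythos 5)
Your proof is correct. Note, however, that the paper does not prove this lemma at all: it is stated as a known fact and attributed to Erd\H{o}s, Rubin, and Taylor via the citation, so there is no internal proof to compare against. Your argument is a complete and self-contained justification of the cited result: the necessity direction (odd cycle with identical lists $\{a,b\}$ would yield a proper $2$-coloring of a non-bipartite graph) is the standard parity obstruction, and your sufficiency argument is sound on both fronts --- the observation that non-identical lists force some edge $v_nv_1$ with $L(v_1)\neq L(v_n)$ (by connectivity), the choice $c_1\in L(v_1)\setminus L(v_n)$ which is nonempty because two distinct $2$-sets cannot contain one another, the greedy pass through $v_2,\dots,v_{n-1}$ each facing only one colored neighbor, and the wrap-around at $v_n$ being vacuous by the choice of $c_1$. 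The even case with equal lists is handled correctly by alternation. In short, you have supplied a valid proof of a statement the paper treats as a black box; this is exactly the argument one would find in the original reference.
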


Let $\mathcal{A}$ denote the family of planar graphs 
without pairwise adjacent $3$-,$4$-,$5$-, and $6-$cycle. 

Next, we explore some properties of graphs in  $\mathcal{A}$ which are helpful in a proof of the main results.  

\begin{lem}\label{forbid1} 
	Every graph $G$ in  $\mathcal{A}$ does not contain each of the followings:\\  
	(1) $C(3,3,4),$\\ 
	(2) $C(3,3,5),$\\ 
	(3) $C(3,4,4^-),$ \\ 
	(4) $C(4,3,5).$\\ 
	(5) $W_5$ that shares exactly one edge with a $6^-$-cycle.  
\end{lem}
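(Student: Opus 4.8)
The plan is to reduce each of (1)--(5) to a single statement: the configuration already contains four cycles, of lengths $3,4,5,6$, that are \emph{pairwise adjacent}, meaning that every two of them share an edge. Since $\mathcal{A}$ consists by definition of the planar graphs that contain no such quadruple, producing these four cycles inside a given configuration shows immediately that the configuration cannot occur in any $G\in\mathcal{A}$. Thus for each item it suffices to name a $3$-, a $4$-, a $5$-, and a $6$-cycle and to verify the $\binom{4}{2}=6$ edge-overlaps.

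For (1)--(4) I would read the cycles directly off the defining labelling. For instance, in $C(3,3,4)=x_1x_2\cdots x_6$ with chords $x_1x_3,x_1x_4$, the cycles $x_1x_3x_4$, $x_1x_2x_3x_4$, $x_1x_3x_4x_5x_6$, $x_1x_2x_3x_4x_5x_6$ have the required lengths and all contain the edge $x_3x_4$, so pairwise adjacency is immediate. The same recipe works throughout: take a triangle at the hub $x_1$ as the $3$-cycle, merge it across a chord with the neighbouring region to get the $4$-cycle, and enlarge along the outer boundary (dropping the chords one at a time) for the $5$- and $6$-cycle. Explicitly I would use $x_1x_3x_4,\ x_1x_2x_3x_4,\ x_1x_4x_5x_6x_7,\ x_1x_3x_4x_5x_6x_7$ for $C(3,3,5)$; $x_1x_2x_3,\ x_1x_3x_4x_5,\ x_1x_2x_3x_4x_5$ together with $x_1x_2x_3x_4x_5x_6$ for $C(3,4,3)$ and with $x_1x_3x_4x_5x_6x_7$ for $C(3,4,4)$; and $x_1x_4x_5,\ x_1x_2x_3x_4,\ x_1x_2x_3x_4x_5,\ x_1x_4x_5x_6x_7x_8$ for $C(4,3,5)$. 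Checking the six overlaps in each case is routine edge-bookkeeping.

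Item (5) is genuinely different, because $W_5$ has only five vertices and therefore no $6$-cycle; when the attached cycle $D$ is short, the $6$-cycle must be manufactured from $D$ together with part of $W_5$. Write $u$ for the hub, $v_1v_2v_3v_4$ for the rim, and let $e=ab$ be the unique shared edge. The one local fact I would record, checked once for a rim edge and once for a spoke, is that in $W_5$ the ends $a,b$ of any edge are joined by paths of all four lengths $1,2,3,4$. Now $D$ itself serves as a cycle of length $\ell(D)\in\{3,4,5,6\}$, a triangle of $W_5$ containing $e$ serves as a $3$-cycle adjacent to it, and the remaining lengths among $4,5,6$ are realised either inside $W_5$ through $e$ or by splicing the external $a$--$b$ path of $D$ onto one of the four $W_5$-paths above; the latter is what produces a $6$-cycle when $\ell(D)<6$. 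In the extreme case $\ell(D)=3$ the third vertex $w$ of $D$ is adjacent to both $a$ and $b$, and attaching the four $W_5$-paths at $w$ gives cycles of lengths $3,4,5,6$ all through the edge $wa$, so they are pairwise adjacent at once.

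The main obstacle is precisely this last item. Unlike (1)--(4), there is in general \emph{no} single edge lying on cycles of all four lengths: when $D$ is a triangle or a quadrilateral attached along a rim edge one checks that no $6$-cycle passes through $e$, so the common reference edge must be moved onto $D$ and the four cycles verified pairwise by hand. The argument therefore splits into the finitely many placements of $e$ (rim versus spoke, which are symmetric) and the four values $\ell(D)\in\{3,4,5,6\}$; the triangle adjacent to $D$ can be taken to contain $e$, which restricts the options enough that each subcase yields an explicit quadruple. I expect the bookkeeping of these subcases, rather than any conceptual difficulty, to be the real content of the proof.
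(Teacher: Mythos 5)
Your proposal is correct and is essentially the paper's own argument: for each configuration exhibit four pairwise adjacent $3$-, $4$-, $5$-, and $6$-cycles, and for (1)--(4) your explicit quadruples coincide with the paper's, while for (5) you make the same case split by the length of the attached cycle and build the missing $6$-cycle by splicing the external path onto a path of $W_5$ (e.g.\ your $\ell(D)=3$ construction is exactly the paper's $vwu, vwuq, vwusq, vwusqr$). The only difference is that you also flag the case where the shared edge is a spoke, which the paper's proof of (5) silently omits by taking the shared edge to be the rim edge $uv$; your recipe handles that case with the same bookkeeping, so this is a point of added care rather than a divergence in method.
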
 

\begin{proof}
	Let $C(l,m,n)$ be obtained from  a cycle $x_1x_2\ldots x_{l+m+n-4}$ 
	with internal chords $x_1x_l$ and $x_1x_{l+m-2}.$\\ 
	\indent (1) Suppose $G$ contains $C(3,3,4).$  
	Then we have four pairwise adjacent cycles $x_1x_2x_3,$ $x_1x_2x_3x_4,$ $x_1x_3x_4x_5x_6,$ and
	$x_1x_2x_3x_4x_5x_6,$ contrary to $G \in \mathcal{A}.$\\  
	\indent (2) Suppose $G$ contains $C(3,3,5).$  
	Then we have four pairwise adjacent cycles $x_1x_3x_4,$ $x_1x_2x_3x_4,$ $x_1x_4x_5x_6x_7,$ and
	$x_1x_3x_4x_5x_6x_7,$ contrary to $G \in \mathcal{A}.$\\
	\indent (3) Suppose $G$ contains $C(3,4,3).$  
	Then we have four pairwise adjacent cycles $x_1x_2x_3,$ $x_1x_3x_4x_5,$ $x_1x_2x_3x_4x_5,$ and
	$x_1x_2x_3x_4x_5x_6,$ contrary to $G \in \mathcal{A}.$\\    
	\indent Suppose $G$ contains $C(3,4,4).$  
	Then we have four pairwise adjacent cycles $x_1x_2x_3,$ $x_1x_3x_4x_5,$ $x_1x_2x_3x_4x_5,$ and
	$x_1x_3x_4x_5x_6x_7,$ contrary to $G \in \mathcal{A}.$\\
    \indent (4) Suppose $G$ contains $C(4,3,5).$  
	Then we have four pairwise adjacent cycles $x_1x_4x_5,$ $x_1x_2x_3x_4,$ $x_1x_2x_3x_4x_5,$ and
	$x_1x_4x_5x_6x_7x_8,$ contrary to $G \in \mathcal{A}.$\\    
	\indent (5) Let the hub of $W_5$ be $q$ and let external vertices be $r,s,u,$ and $v$ in a cyclic order.\\
	\indent Suppose there is a cycle $uvw.$ 
	Then we have four pairwise adjacent cycles $vwu,$ $vwuq,$ $vwusq,$ and
	$vwusqr,$ contrary to $G \in \mathcal{A}.$\\    
	\indent Suppose there is a cycle $uvwx.$ 
	Then we have four pairwise adjacent cycles $usq,$ $usqv,$ $usqrv,$ and
	$usqvwx,$ contrary to $G \in \mathcal{A}.$\\  
	\indent Suppose there is a cycle $uvwxy.$ 
	Then we have four pairwise adjacent cycles $uqv,$ $uqrv,$ $uqsrv,$ and
	$uqvwxy,$ contrary to $G \in \mathcal{A}.$\\  
	\indent Suppose there is a cycle $uvwxyz.$ 
	Then we have four pairwise adjacent cycles $uvq,$ $uvqs,$ $uvqrs,$ and
	$uvwxyz,$ contrary to $G \in \mathcal{A}.$\\  
\end{proof}

\begin{lem}\label{C(3,5)} 
	If $C$ is a $6$-cycle with a triangular chord, 
	then $C$ has exactly one chord. 
\end{lem}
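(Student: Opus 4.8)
The plan is to fix a labeling and reduce the statement to a short finite check. Write $C = x_1x_2x_3x_4x_5x_6$ and assume without loss of generality that the given triangular chord is $x_1x_3$, so that $T := x_1x_2x_3$ is a $3$-cycle. The key observation is that, regardless of any further chords, the single chord $x_1x_3$ already produces three pairwise adjacent cycles: the $3$-cycle $T$, the $5$-cycle $Q := x_1x_3x_4x_5x_6$, and the $6$-cycle $C$ itself (indeed $T$ and $Q$ share $x_1x_3$, while $C$ shares boundary edges with each of them). Since $G\in\mathcal{A}$ forbids four pairwise adjacent cycles of lengths $3,4,5,6$, it suffices to show that a second chord would always create a $4$-cycle adjacent to each of $T$, $Q$, and $C$; this would complete a forbidden quadruple and give a contradiction.

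So I would assume, for contradiction, that $C$ carries a second chord $e$ and enumerate the options. A $6$-cycle has exactly $\binom{6}{2}-6 = 9$ chords: the six triangular ones $x_1x_3, x_2x_4, x_3x_5, x_4x_6, x_5x_1, x_6x_2$ and the three ``long'' ones $x_1x_4, x_2x_5, x_3x_6$; removing the given $x_1x_3$ leaves eight candidates for $e$. The reflection $\sigma$ fixing $x_2$ and $x_5$ and swapping $x_1\leftrightarrow x_3$ and $x_4\leftrightarrow x_6$ is an automorphism of $C\cup\{x_1x_3\}$, and it identifies $x_2x_4$ with $x_2x_6$, $x_3x_5$ with $x_1x_5$, and $x_1x_4$ with $x_3x_6$; hence up to symmetry only five cases remain, namely $e\in\{x_2x_4,\,x_3x_5,\,x_4x_6,\,x_1x_4,\,x_2x_5\}$.

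In each remaining case I would name the required $4$-cycle explicitly and verify it meets all of $T,Q,C$ in an edge. For the triangular chords this is immediate by pairing $e$ with $x_1x_3$: the chord $x_2x_4$ yields $x_1x_2x_4x_3$, the chord $x_3x_5$ yields $x_1x_3x_5x_6$, and $x_4x_6$ yields $x_1x_3x_4x_6$. The delicate cases are the long chords, where the naive choice fails: for $e=x_1x_4$ the $4$-cycle $x_1x_4x_5x_6$ shares no edge with $T$, so instead I would use $x_1x_2x_3x_4$ (which meets $T$ along $x_1x_2,x_2x_3$, meets $Q$ along $x_3x_4$, and meets $C$); the symmetric chord $x_3x_6$ is handled by $x_1x_2x_3x_6$, and the self-symmetric $x_2x_5$ by $x_2x_3x_4x_5$.

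The only real difficulty here is bookkeeping: guaranteeing that the chosen $4$-cycle genuinely shares an edge with each of $T$, $Q$, and $C$, which is precisely where the obvious $4$-cycle breaks down for the long chords and forces the less obvious choice above. Once the correct $4$-cycle is produced in each of the five cases, each case exhibits four pairwise adjacent $3$-,$4$-,$5$-,$6$-cycles, contradicting $G\in\mathcal{A}$. Therefore no second chord can exist, and $C$ has exactly one chord.
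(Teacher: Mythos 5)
Your proof is correct and follows essentially the same route as the paper: reduce to five chord positions via the reflection fixing the triangle's apex, and in each case exhibit a $4$-cycle through the new chord that, together with the triangle $T$, the $5$-cycle $Q$, and $C$ itself, forms four pairwise adjacent $3$-, $4$-, $5$-, $6$-cycles contradicting $G\in\mathcal{A}$. Your chosen $4$-cycles coincide (up to the symmetry) with the paper's, and all adjacency checks are valid.
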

\begin{proof}
	Let $C=tuvxyz$ with a chord $tv.$  Suppose to the contrary that $C$ has another chord $e.$ 
	By symmetry, it suffices to assume that $e=ux, uy, tx, ty,$ or $xz.$\\ 
	\indent If $e=ux,$ then we have four pairwise adjacent cycles $tuv, tuxv, tvxyz,$ and $tuvxyz,$ 
	contrary to $G \in \mathcal{A}.$\\ 
	\indent If $e=uy,$ then we have four pairwise adjacent cycles $tuv, uvxy, tvxyz,$ and $tuvxyz,$  
	contrary to $G \in \mathcal{A}.$\\ 
	\indent If $e=tx,$ then we have four pairwise adjacent cycles $tuv, tuvx, tvxyz,$ and $tuvxyz,$  
	contrary to $G \in \mathcal{A}.$\\ 
	\indent If $e=ty,$ then we have four pairwise adjacent cycles $tuv, tvxy, tvxyz,$ and $tuvxyz,$  
	contrary to $G \in \mathcal{A}.$\\ 
	\indent If $e=xz,$ then we have four pairwise adjacent cycles $tuv, tvxz, tvxyz,$ and $tuvxyz,$  
	contrary to $G \in \mathcal{A}.$\\
	\indent Thus $C$ has exactly one chord. 
\end{proof}

\section{Structure}  

To prove Theorem~\ref{main}, we prove a stronger result as follows. 

\begin{thm}\label{strong}
	If $G \in \mathcal{A}$ with a $4$-assignment $L,$ 
	then each precoloring of a $3$-cycle in $G$ can be extended to an $L$-coloring of $G$.
\end{thm}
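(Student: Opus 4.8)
The plan is to argue by contradiction through a minimal counterexample together with the discharging method. Suppose the theorem fails, and choose $G \in \mathcal{A}$ with a $4$-assignment $L$ and a precolored $3$-cycle $C_0$ so that the precoloring does not extend and $|V(G)|$ is smallest possible. First I would normalize the embedding so that $C_0$ bounds the outer face, which is justified by a separating-triangle reduction. If some $3$-cycle $C$ (possibly $C_0$ itself) is separating, one first extends the precoloring to $G - \mathrm{int}(C)$ by minimality (this subgraph lies in $\mathcal{A}$ and still contains the precolored triangle $C_0$); the resulting colors on $C$ then serve as a precoloring of the $3$-cycle $C$ in $\mathrm{int}(C) \cup C$, which is colorable by minimality, and splicing the two colorings contradicts the choice of $G$. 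Hence $G$ has no separating $3$-cycle, and after possibly re-embedding on the sphere we may assume $C_0$ is the outer boundary with every other vertex drawn inside.

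Next I would collect the elementary reducible configurations. Every vertex $v \notin V(C_0)$ has degree at least $4$: otherwise $G - v \in \mathcal{A}$ is smaller, still carries the precolored $C_0$, hence is colorable, and then $v$ sees at most $3$ colors and can be colored from its list of size $4$. Further deletion and identification arguments, combined with Lemma~\ref{forbid1} and Lemma~\ref{C(3,5)}, exclude the dense clusters of short cycles that membership in $\mathcal{A}$ forbids; these provide the local structure — bounds on how $3$-, $4$-, and $5$-faces may share edges and vertices, and the precise chord structure of short cycles — needed for the discharging to close.

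For the discharging stage I would assign to each vertex $v$ and each face $f$ the initial charge $\mu(v) = d(v) - 4$ and $\mu(f) = d(f) - 4$, so that by Euler's formula
\[
\sum_{v \in V(G)} \mu(v) + \sum_{f \in F(G)} \mu(f) = -8.
\]
The only elements carrying negative charge are $3$-faces and $5$-faces (each $-1$) and the low-degree vertices, which by the previous step occur only on $C_0$; the donors are the $5^{+}$-vertices and the $6^{+}$-faces. I would design rules sending charge from every $5^{+}$-vertex and $6^{+}$-face to its incident $3$- and $5$-faces, with the outer region absorbing the fixed deficit coming from $C_0$. The classification of $5$-faces as \emph{poor}, \emph{semi-rich}, or \emph{rich} governs how much each needs: a rich or semi-rich $5$-face is financed by its own incident $5^{+}$-vertices, whereas a \emph{poor} $5$-face has none and must be supplied through a neighboring high-degree vertex. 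This is exactly the purpose of a \emph{flaw $4$-vertex}, a $(3,5,3,5^{+})$-vertex that routes charge out of its $5^{+}$-neighborhood into the adjacent poor $5$-face.

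The main obstacle, and the heart of the improvement over Xu and Wu, is the accounting for poor $5$-faces: since such a face has all vertices of degree at most $4$ it cannot be financed locally, and one must use the configurations forbidden by Lemma~\ref{forbid1} to prove that poor $5$-faces and flaw $4$-vertices cannot accumulate around a single donor. Checking that, after all rules are applied, every interior vertex and every interior face ends with nonnegative charge — so that the total exceeds $-8$, the desired contradiction — reduces to a finite case analysis over the face-degree patterns $(d_1,\dots,d_k)$ around each vertex, constrained by the absence of pairwise adjacent $3$-, $4$-, $5$-, and $6$-cycles. I expect the case analysis around $5^{+}$-vertices that are simultaneously incident to several $3$-faces and poor $5$-faces to be where the real work lies.
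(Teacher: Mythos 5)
Your overall architecture (minimal counterexample, elimination of separating $3$-cycles by splicing two colorings, minimum degree $4$ in the interior, then discharging with the outer face absorbing the deficit of $C_0$) is exactly the paper's, and those reduction steps are argued correctly. However, there is a genuine gap in the discharging stage, and it is not merely that the rules are left unspecified. With your normalization $\mu(v)=d(v)-4,$ $\mu(f)=d(f)-4,$ a $5$-face has charge $+1,$ not $-1$ as you claim, and---more seriously---every interior $4$-vertex has charge exactly $0$ and therefore cannot donate anything. Graphs in $\mathcal{A}$ do contain $3$-faces and $5$-faces all of whose vertices have degree $4$ (the paper's ``poor'' faces; nothing in Lemmas~\ref{forbid1} and~\ref{C(3,5)} excludes them), and your stated donors, the $5^{+}$-vertices and $6^{+}$-faces, need not be anywhere near such a face: a poor $3$-face can be surrounded entirely by $5$-faces and $4$-vertices. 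So the rules you describe leave a poor $3$-face at charge $-1,$ and your remark that a flaw $4$-vertex ``routes charge out of its $5^{+}$-neighborhood'' cannot be implemented, since that vertex has nothing to route. The paper avoids this precisely by taking $\mu(u)=2d(u)-6$ and $\mu(f)=d(f)-6,$ so that every interior vertex, including a $4$-vertex, starts with charge at least $2$ and can finance its incident poor faces directly (rules (R1.1), (R4.1)); this choice is not cosmetic but is what makes the accounting close.

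The second gap is that the substantive content of the proof---the reducible configurations established by actual list-coloring arguments, not just by the forbidden-cycle patterns of Lemma~\ref{forbid1}---is absent from your plan. Beyond the degree bound, the paper needs Lemma~\ref{theta}, Lemma~\ref{strange}, Lemma~\ref{faces}, and Corollary~\ref{W5}, each of which deletes a small subgraph $H,$ colors $G-H$ by minimality, and completes the coloring of $H$ using Lemma~\ref{listcycle} or a careful ordering of the vertices; these are what guarantee that clusters of adjacent $3$-faces (trios, $W_5$'s) and adjacent $5^{-}$-faces always see a nearby $5^{+}$-vertex to pay for them. Without identifying these configurations and then verifying nonnegativity of the final charge case by case around each vertex and face (the bulk of Section~4), the argument does not close; as written it is a correct outline of the strategy with an incorrectly normalized charge function rather than a proof.
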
 

If $G$ does not contain a $3$-cycles, then $G$ is $4$-choosable as stated above.  
So we consider $(G, C_0)$ and a $4$-assignment $L$ where $C_0$ is a precolored $3$-cycle 
as a minimal counterexample to Theorem~\ref{strong}. Embed $G$ in the plane. 

\begin{lem}\label{separating}
	$G$ has no separating $3$-cycles.
\end{lem}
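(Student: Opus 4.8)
The plan is to argue by minimality, splitting $G$ along the separating $3$-cycle and colouring the two sides one after the other. Suppose for contradiction that $G$ contains a separating $3$-cycle $C$, so that both $int(C)$ and $ext(C)$ are nonempty. Let $G_{in}$ be the subgraph of $G$ induced by $V(C)\cup V(int(C))$ and let $G_{out}$ be the subgraph induced by $V(C)\cup V(ext(C))$. These two subgraphs overlap exactly in $C$ and their union is all of $G$. Since deleting vertices or edges cannot create a new cycle, membership in $\mathcal{A}$ is hereditary, so $G_{in},G_{out}\in\mathcal{A}$. Moreover, because $ext(C)\neq\emptyset$ the graph $G_{in}$ is a proper subgraph of $G$, and because $int(C)\neq\emptyset$ the graph $G_{out}$ is a proper subgraph of $G$; hence minimality of $(G,C_0)$ applies to each piece.

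Next I would locate the precoloured triangle $C_0$ relative to $C$. In the fixed plane embedding, $C$ is a Jordan curve separating the plane into an inside and an outside region. Because $G$ is drawn without edge crossings and edges meet only at shared endpoints, no edge of the connected cycle $C_0$ can pass from a strictly interior vertex to a strictly exterior vertex of $C$. Thus $C_0$ lies entirely in the closure of one side of $C$. After possibly interchanging the names ``inside'' and ``outside'', we may assume $V(C_0)\subseteq V(G_{out})$, so that the precoloured triangle sits inside $G_{out}$.

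The colouring is then completed in two applications of minimality. Applying minimality to $G_{out}$, together with its precoloured $3$-cycle $C_0$ and with $L$ restricted to $G_{out}$, the precolouring of $C_0$ extends to an $L$-colouring $\phi_{out}$ of $G_{out}$. In particular $\phi_{out}$ properly colours the three vertices of $C$, giving a precolouring of the $3$-cycle $C$ now viewed inside $G_{in}$. Applying minimality a second time to $G_{in}$ with $C$ as its precoloured $3$-cycle produces an $L$-colouring $\phi_{in}$ of $G_{in}$ that agrees with $\phi_{out}$ on $C$. Since $V(G_{in})\cap V(G_{out})=V(C)$ and the two colourings coincide there, their union is a proper $L$-colouring of $G$ extending the precolouring of $C_0$, contradicting the choice of $(G,C_0)$ as a counterexample.

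The only points needing care are that each piece is a genuine smaller instance of the hypothesis and that the two colourings glue. Heredity of $\mathcal{A}$ and the properness of $G_{in}$ and $G_{out}$ follow at once from $int(C),ext(C)\neq\emptyset$, and the gluing is automatic once the colourings match on $V(C)$. The mildly delicate step, and the one I expect to be the main obstacle, is the placement of $C_0$: one must invoke planarity to rule out $C_0$ straddling $C$, ensuring that after relabelling the precoloured triangle lies in $G_{out}$, which is exactly what lets the first extension begin from the prescribed precolouring rather than an arbitrary one.
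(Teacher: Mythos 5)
Your argument is correct and is essentially the paper's own proof: split $G$ along the separating $3$-cycle $C$ into the closed inside and outside, place $C_0$ on one side by planarity, extend the precoloring of $C_0$ to that side by minimality, then use the induced coloring of $C$ as the precolored triangle for the other side and glue. You spell out the heredity of $\mathcal{A}$ and the Jordan-curve placement of $C_0$ that the paper dispatches with ``by symmetry,'' but the route is the same.
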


\begin{proof}
	Suppose to the contrary that there exists $G$ contains a separating $3$-cycle $C.$ 
	By symmetry, we assume $V(C_0) \subseteq V(C) \cup int(C).$ 
	By the minimality of $G,$ a precoloring of $C_0$ can be extended to $V(C) \cup int(C).$  
	After $C$ is colored, then again the coloring of $C$ can be extended to $ext(C)$. 
	Thus we have an $L$-coloring of $G$, a contradiction.
\end{proof}

So we may assume that a minimal counterexample $(G,C_0)$ 
has no separating $3$-cycles 
and $C_0$ is the boundary of the unbounded face $D$ of $G$ in the rest of this paper.

\begin{lem}\label{minimum}
	Each vertex in $int(C_0)$ has degree at least four.
\end{lem}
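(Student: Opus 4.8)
The plan is a standard reducibility argument driven by the minimality of the counterexample $(G,C_0)$. Suppose to the contrary that some vertex $v \in int(C_0)$ has degree at most three. Since $v$ lies strictly inside $C_0$, we have $v \notin V(C_0)$, so $C_0$ remains a $3$-cycle of $G-v$ carrying the same precoloring. Moreover $\mathcal{A}$ is closed under vertex deletion (deleting a vertex cannot create four pairwise adjacent cycles), so $G-v \in \mathcal{A}$. Thus $(G-v,C_0)$, equipped with $L$ restricted to $G-v$, is a legitimate instance of the problem of Theorem~\ref{strong}, and it has strictly fewer vertices than $G$.

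By the minimality of $(G,C_0)$, the pair $(G-v,C_0)$ is not a counterexample, so the precoloring of $C_0$ extends to an $L$-coloring $\phi$ of $G-v$. It remains to color $v$ itself. The at most three neighbours of $v$ use at most three colors under $\phi$, while $|L(v)|=4$; hence at least one color of $L(v)$ is free at $v$. Assigning such a color to $v$ extends $\phi$ to an $L$-coloring of $G$ that still agrees with the precoloring on $C_0$, contradicting the assumption that $(G,C_0)$ is a counterexample. This contradiction forces every vertex of $int(C_0)$ to have degree at least four.

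I expect no serious obstacle here: the coloring step is immediate once the degree bound at most three is paired with the list size four, and the only point requiring care is the bookkeeping that $G-v$ is a genuine smaller instance in $\mathcal{A}$ with the precolored $C_0$ left intact — both of which follow because $v$ is interior and $\mathcal{A}$ is subgraph-closed. (Should $G-v$ be disconnected, this causes no difficulty, since Theorem~\ref{strong}, applied by minimality to the smaller graph $G-v$, already asserts the extension of the given precoloring to the whole of $G-v$.)
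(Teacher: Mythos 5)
Your proof is correct and follows essentially the same route as the paper's: delete the $3^-$-vertex $v$, invoke minimality to extend the precoloring of $C_0$ to $G-v$, and observe that the residual list at $v$ is nonempty since $|L(v)|=4$ exceeds the number of neighbours. The extra bookkeeping you supply (that $\mathcal{A}$ is closed under vertex deletion and that $C_0$ survives intact in $G-v$) is implicit in the paper's one-line argument and does no harm.
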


\begin{proof}
	Suppose otherwise that there exists a $3^-$-vertex $v$ in $int(C_0)$. 
	By the minimality of $(G,C_0)$, $(G-v,C_0)$ has an $L$-coloring. 
	One can see that the residual list $L'(v)$ is not empty. 
	Thus we can color  $v$ and thus extend a coloring to $G,$ a contradiction.  
\end{proof}

\begin{lem}\label{config} For faces in $G,$ each of the followings holds.\\
	(1) The boundary of a bounded $6^-$-face is a cycle.\\
	(2) If a  bounded $k_1$-face $f$ and a bounded $k_2$-face $g$ are adjacent 
	where $k_1+k_2 \leq 8,$ 
	then $B(f)\cup B(g) = C(k_1,k_2).$\\
	(3) If  a bounded $4$-face $f$ and a bounded $5$-face $g$ are adjacent, then $B(f)\cup B(g)$ is $C(4,5)$ 
	or a configuration as in Figure 1 where $tuy$ is $C_0.$\\
	(4) If bounded $5$-faces $f$ and $g$ are adjacent, then $B(f)\cup B(g)$ is $C(5,5)$ 
	or a configuration as in Figure 2. 
\end{lem}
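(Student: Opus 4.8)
The plan is to prove the four items in the stated order, since (1) feeds into (2)--(4), and all of them draw on the facts already secured for the minimal counterexample $(G,C_0)$: there is no separating $3$-cycle (Lemma~\ref{separating}), every vertex of $int(C_0)$ has degree at least $4$ (Lemma~\ref{minimum}), and $G$ contains none of the configurations of Lemmas~\ref{forbid1} and~\ref{C(3,5)}. I read each conclusion ``$B(f)\cup B(g)=C(\cdots)$'' as an equality of subgraphs, so the substantive claim in (2)--(4) is that two adjacent bounded faces meet in exactly one edge and are otherwise vertex-disjoint, apart from the explicitly drawn exceptions.

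For (1) I argue by contradiction. Let $f$ be a bounded $6^-$-face whose boundary walk $W$ fails to be a cycle; then $W$ repeats an edge or a vertex. If an edge $e=xy$ is repeated, then $e$ is a bridge with $f$ on both of its sides, so the piece of $int(C_0)$ cut off inside $f$ is joined to the rest of $G$ only through $x$ or $y$, and therefore contains a vertex of degree too small there, contradicting Lemma~\ref{minimum}. If instead a vertex $v$ is repeated but no edge is, then $v$ is a cut vertex and $W$ decomposes into two closed subwalks meeting only at $v$; because $\deg(f)\le 6$ these subwalks are short, and one of them is a short separating cycle or encloses a single low-degree vertex, contradicting Lemma~\ref{separating} or Lemma~\ref{minimum}. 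Hence $W$ is a cycle.

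For (2)--(4), part (1) makes both $B(f)$ and $B(g)$ cycles (each face has degree at most $6$ in all three items), and since $f,g$ are bounded faces lying on opposite sides of a common edge they share at least one edge $xy$. I then suppose they share more than $xy$ and separate into two cases: (A) $f$ and $g$ share a path of length at least $2$, and (B) $f$ and $g$ share $xy$ together with a further vertex $w$. In case (A), an interior vertex of the shared path has only $f$ and $g$ incident to it, forcing its degree to be $2$; by Lemma~\ref{minimum} this cannot occur in $int(C_0)$, which is precisely why every surviving exception is pushed onto the precolored triangle $C_0$ and appears as the pictures in Figures 1 and 2. In case (B) the union $B(f)\cup B(g)$ is a theta-type subgraph whose constituent shorter cycles, after accounting for the interior of $C_0$, reproduce one of $C(3,3,4)$, $C(3,3,5)$, $C(3,4,4^-)$, or $C(4,3,5)$ (excluded by Lemma~\ref{forbid1}), a $6$-cycle carrying two chords (excluded by Lemma~\ref{C(3,5)}), or a separating $3$-cycle (excluded by Lemma~\ref{separating}). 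For (2) the bound $k_1+k_2\le 8$ keeps every case small enough that even the $C_0$-degeneracies are eliminated, so no exception remains; for (3) and (4) the larger totals $9$ and $10$ leave exactly the configurations of Figures 1 and 2.

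The main obstacle is the case analysis in (2)--(4): one must enumerate every way two short cycles sharing an edge can acquire an extra shared vertex or edge, match each to a forbidden configuration, and---most delicately---verify that the only gluings not killed by Lemmas~\ref{separating}, \ref{minimum}, \ref{forbid1}, and~\ref{C(3,5)} are those whose degeneracy lies on $C_0$, giving precisely Figures 1 and 2. A secondary difficulty is the cut-vertex branch of (1), where one must take care to extract an honestly separating short cycle (or a trapped low-degree vertex) from the repeated-vertex structure of $W$.
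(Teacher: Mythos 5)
Your skeleton matches the paper's: both boundaries are cycles by (1), the faces share an edge, and any additional shared vertex or edge is ruled out by Lemma~\ref{minimum}, Lemma~\ref{separating}, or the forbidden configurations, leaving only the pictured exceptions. But there are two genuine problems. First, the entire content of (2)--(4) is the vertex-identification enumeration (for each pair of face lengths, each way a vertex of $B(f)$ can coincide with a vertex of $B(g)$), and you explicitly defer it; the paper carries it out case by case, and in item (4) the problematic identifications are killed not by citing Lemma~\ref{forbid1} or Lemma~\ref{C(3,5)} but by directly exhibiting four pairwise adjacent $3$-, $4$-, $5$-, $6$-cycles inside $B(f)\cup B(g)$ (e.g.\ for $r=x$ one gets $vwx,\ uvxy,\ uvwxy,\ stuvwx$). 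Your plan of ``reproducing one of $C(3,3,4),\dots$'' does not actually fit these cases, since $B(f)\cup B(g)$ for two $5$-faces with one extra identified vertex is not any of those $C(l,m,n)$ graphs.

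Second, and more importantly, you misidentify the mechanism behind the surviving exceptions. You attribute them to case (A) (a shared path of length $\ge 2$) via the failure of Lemma~\ref{minimum} on $C_0$; but an interior vertex of a path shared by two \emph{bounded} faces is never on $C_0$ (it would have to be incident to the unbounded face), so case (A) yields no exceptions at all. Both Figure~1 and Figure~2 arise from your case (B), a single extra identified vertex with no extra shared edge. Figure~1 ($t=x$) survives because the resulting triangle $tuy$ fails to be \emph{separating} only when it coincides with $C_0$ itself --- it is the separating-cycle test, not the degree test, that degenerates. Figure~2 ($s=x$) survives for a different reason entirely: the configuration $H$ is simply not excluded by any structural lemma, can sit anywhere in $int(C_0)$, and is disposed of only later by the list-coloring argument of Lemma~\ref{strange}. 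As written, your plan would lead you to try (and fail) to eliminate $H$ combinatorially, or to wrongly conclude it must touch $C_0$.
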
 

\begin{proof} 
	(1) One can observe that a boundary of a $5^-$-face is always a cycle. 
	Consider a bounded $6$-face $f.$ If $B(f)$ is not a cycle, then a boundary closed walk is in a form of 
	$uvwxywu.$ By Lemma~\ref{minimum}, $u$ or $x$ has degree at least $4.$   
	Consequently, $uvw$ or $xyw$ is a separating $3$-cycle, contrary to Lemma~\ref{separating}.\\ 
	\indent (2) It suffices to show that such $f$ and $g$ share exactly two vertices.  
	Let $B(f)=uvw$ and $B(g)=vwx.$ If $u=x,$ then $f$ or $g$ is  the unbounded face, a contradiction.\\ 
	\indent Let $B(f)=uvw$ and $B(g)=vwxy.$ If $u = x$ or $y,$   
	then $d(w) =2$ or $d(v)=2,$ contrary to Lemma~\ref{minimum}.\\
    \indent Let $B(f)=uvw$ and $B(g)=vwxyz.$ If $u = x$ or $z,$ 
	then $d(v) =2$ or $d(w)=2,$ contrary to Lemma~\ref{minimum}.
	If $u=y,$ then $vyz$ or $wxy$ is a separating $3$-cycle, contrary to Lemma~\ref{separating}.\\ 
	\indent Let $B(f)=stuv$ and $B(g)=uvwx.$ If $s=w,$ then $d(v)=2,$ contrary to Lemma~\ref{minimum}. 
	If $s=x,$ then $utx$ or $vwx$ is a separating $3$-cycle, contrary to Lemma~\ref{separating}. 
	The remaining cases are similar. \\ 
	\indent (3) Let $B(f)=stuv$ and $B(g)=uvwxy.$ 
	It suffices to show that $V(B(f)) \cap V(B(g)) = \{u,v\}$ or $\{u, v, x\}$ where $x=s$ or $t.$  
	If $t=w,$ then $svw$ or $uvw$ is a separating $3$-cycle, contrary to Lemma~\ref{separating}. 
	If $t=x,$ then $tuy$ is $C_0,$ otherwise $tuy$ is a separating cycle, contrary to Lemma~\ref{separating}. 
	The remaining cases are similar.\\ 
	\indent (4) Let $B(f)=rstuv$ and $B(g)=uvwxy.$ 
	It suffices to show that $V(B(f)) \cap V(B(g)) = \{u,v\}$ or $\{u, v, x=s\}.$    
	If $r=w,$ then $d(v)=2,$ contrary to Lemma~\ref{minimum}.
	If $B(f) \cap B(g)= \{ u,v,r=x\},$ 
	then $vwx, uvxy, uvwxy,$ and $stuvwx$ are four pairwise adjacent cycles, contrary to $G \in \mathcal{A}.$ 
	If $B(f) \cap B(g)= \{ u,v,r=x, s=y\},$ 
	then $rvx, rvuz, rvuts,$ and $rstuvs$ are four pairwise adjacent cycles, contrary to $G \in \mathcal{A}.$ 
	then $uts$ or $vwx$ is a separating $3$-cycle, contrary to Lemma~\ref{separating}.
	If $B(f) \cap B(g)= \{ u,v,r=y\},$ then $ruv$ is a separating $3$-cycle, contrary to Lemma~\ref{separating}.
	If $B(f) \cap B(g)= \{ u,v,s=w\},$ 
	then $rvw, tuvw, uvwxy,$ and $rwxyuv$ are four pairwise adjacent cycles, contrary to $G \in \mathcal{A}.$ 
	The remaining cases are similar.\\ 	 
\end{proof}


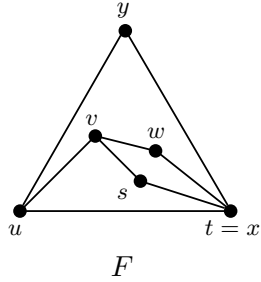
\begin{figure}[ht]\label{fig3}
\centering
\scalebox{1} 
{
\begin{pspicture}(0,-1.8785938)(3.4871874,1.8785938)
\psdots[dotsize=0.18](1.663125,1.4289062)
\psdots[dotsize=0.18](2.063125,-0.17109375)
\psdots[dotsize=0.18](1.263125,0.02890625)
\psdots[dotsize=0.18](0.263125,-0.9710938)
\psdots[dotsize=0.18](3.063125,-0.9710938)
\usefont{T1}{ptm}{m}{n}
\rput(1.6245313,-1.7010938){$F$}
\usefont{T1}{ptm}{m}{n}
\rput(3.0867188,-1.1960938){\footnotesize $t=x$}
\psdots[dotsize=0.18](1.863125,-0.57109374)
\usefont{T1}{ptm}{m}{n}
\rput(0.19671875,-1.2160938){\footnotesize $u$}
\usefont{T1}{ptm}{m}{n}
\rput(2.0767188,0.06390625){\footnotesize $w$}
\psline[linewidth=0.024cm](1.263125,0.02890625)(1.863125,-0.57109374)
\psline[linewidth=0.024cm](1.263125,0.02890625)(2.063125,-0.17109375)
\psline[linewidth=0.024cm](1.263125,0.02890625)(0.263125,-0.9710938)
\psline[linewidth=0.024cm](1.703125,1.3689063)(3.023125,-0.89109373)
\psline[linewidth=0.024cm](1.603125,1.3689063)(0.303125,-0.89109373)
\psline[linewidth=0.024cm](0.343125,-0.9710938)(2.983125,-0.9710938)
\psline[linewidth=0.024cm](1.923125,-0.5910938)(2.983125,-0.93109375)
\psline[linewidth=0.024cm](2.123125,-0.21109375)(3.003125,-0.9110938)
\usefont{T1}{ptm}{m}{n}
\rput(1.2167188,0.24390624){\footnotesize $v$}
\usefont{T1}{ptm}{m}{n}
\rput(1.6367188,1.7039063){\footnotesize $y$}
\usefont{T1}{ptm}{m}{n}
\rput(1.6267188,-0.73609376){\footnotesize $s$}
\end{pspicture} 
}
\caption{A graph $F$ is formed by a $4$-face and a $5$-face with $tuy=C_0$}
\end{figure}

\begin{figure}[ht]\label{fig2}
\centering
\scalebox{1} 
{
\begin{pspicture}(0,-1.9685937)(4.5871873,1.9685937)
\psdots[dotsize=0.18](2.223125,1.4789063)
\psdots[dotsize=0.18](2.223125,-0.12109375)
\psdots[dotsize=0.18](2.223125,-0.92109376)
\psdots[dotsize=0.18](0.423125,0.47890624)
\psdots[dotsize=0.18](4.023125,0.47890624)
\psline[linewidth=0.024cm](2.223125,-0.12109375)(2.223125,-0.92109376)
\usefont{T1}{ptm}{m}{n}
\rput(2.2145312,-1.7910937){$H$}
\usefont{T1}{ptm}{m}{n}
\rput(2.2667189,1.7939062){\footnotesize $s=x$}
\psline[linewidth=0.024cm](0.423125,0.47890624)(2.223125,1.4789063)
\psline[linewidth=0.024cm](2.223125,1.4789063)(4.023125,0.47890624)
\psline[linewidth=0.024cm](4.023125,0.47890624)(2.223125,-0.92109376)
\psline[linewidth=0.024cm](2.223125,-0.92109376)(0.423125,0.47890624)
\psdots[dotsize=0.18](1.623125,0.47890624)
\psdots[dotsize=0.18](2.823125,0.47890624)
\psline[linewidth=0.024cm](1.623125,0.47890624)(2.223125,1.4789063)
\psline[linewidth=0.024cm](2.223125,1.4789063)(2.823125,0.47890624)
\psline[linewidth=0.024cm](2.823125,0.47890624)(2.223125,-0.12109375)
\psline[linewidth=0.024cm](1.623125,0.47890624)(2.223125,-0.12109375)
\usefont{T1}{ptm}{m}{n}
\rput(0.17671876,0.61390626){\footnotesize $r$}
\usefont{T1}{ptm}{m}{n}
\rput(1.3667188,0.5939062){\footnotesize $t$}
\usefont{T1}{ptm}{m}{n}
\rput(3.0767188,0.5939062){\footnotesize $y$}
\usefont{T1}{ptm}{m}{n}
\rput(4.2767186,0.57390624){\footnotesize $w$}
\usefont{T1}{ptm}{m}{n}
\rput(2.4367187,-0.22609375){\footnotesize $u$}
\usefont{T1}{ptm}{m}{n}
\rput(2.1967187,-1.2260938){\footnotesize $v$}
\end{pspicture} 
}
\caption{A graph $H$ is formed by two adjacent $5$-faces but is not $C(5,5)$}
\end{figure}
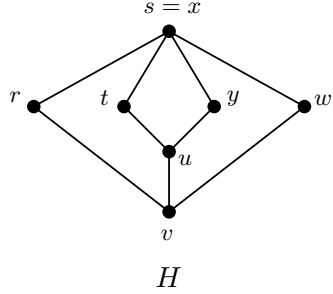

\begin{lem}\label{forbid2} 
	If a $k$-vertex $v$ is incident to bounded faces $f_1,\ldots,f_k$ 
	in a cyclic order and $d_i$ is a degree of a face $f_i$ for each $i\in\{1,\ldots,k\},$ 
	then each of the followings holds.\\
	(1) $(d_1,d_2,d_3) \neq (3,3,4).$\\
	(2) $(d_1,d_2,d_3) \neq (3,3,5).$\\ 
	(3) $(d_1,d_2,d_3) \neq (3,4,4^-).$\\ 
	(4) $(d_1,d_2,d_3) \neq (4,3,5).$\\ 
	(5) Let $H$ be $W_5$ such that  a hub and each two vertices of consecutive external vertices  
	form a boundary of an inner $3$-face. Then $H$ is not adjacent to a boundary of a $6^-$-face 
	other than these $3$-faces.    
\end{lem} 

\begin{proof}
	Let $F = B_1 \cup B_2 \cup B_3$ where $B_i$ denote $B(f_i).$\\ 
	\indent (1) Suppose $(d_1,d_2,d_3) = (3,3,4).$
	Let $B_1 = rsv,$ $B_2= vst,$ and $B_3=vtxy.$ 
	It follows from Lemma~\ref{config}~(2) that $V(B_1) \cap V(B_2) = \{s, v\}$ 
	and $V(B_2) \cap V(B_3) = \{t,v\}.$ 
	If $r=x,$  then $stx$ or $vxy$ is a separating $3$-cycle, contrary to Lemma~\ref{separating}.
	If $r=y,$ then $d(v)=3,$ contrary to Lemma~\ref{minimum}.  
	Thus $V(B_1) \cap V(B_3) =\{v\}.$ 
	Altogether we have $F=C(3,3,4),$ contrary to Lemma~\ref{forbid1}(1).\\
	\indent (2) Suppose $(d_1,d_2,d_3) = (3,3,5).$
	Let $B_1 = rsv,$ $B_2= vst,$ and $B_3=vtxyz.$ 
	It follows from Lemma~\ref{config}~(2) that 
	$V(B_1) \cap V(B_2) = \{s, v\}$ and $V(B_2) \cap V(B_3) = \{t,v\}.$    
	We have $C= stxyzv$ is a $6$-cycle with a triangular chord $tv.$ 
	If $r \in \{x,y,z\},$ then $C$ has another chord, contrary to Lemma~\ref{C(3,5)}. 
	Thus $V(B_1) \cap V(B_3) =\{v\}.$ 
	Altogether we have $F=C(3,3,5),$ contrary to Lemma~\ref{forbid1}(2).\\ 
	\indent (3) Suppose $(d_1,d_2,d_3) = (3,4,3).$
	Let $B_1 = rsv,$ $B_2= vstu,$ and $B_3=vuw.$ 
	It follows from Lemma~\ref{config}~(2) that $V(B_1) \cap V(B_2) = \{s, v\}$ 
	and $V(B_2) \cap V(B_3) = \{u,v\}.$    
	If $r=w,$ then $d(v)=3,$ contrary to Lemma~\ref{minimum}.  	
	Thus $V(B_1) \cap V(B_3) =\{v\}.$ 
	Altogether we have $F=C(3,4,3),$ contrary to Lemma~\ref{forbid1}(3).\\ 
	\indent Suppose $(d_1,d_2,d_3) = (3,4,4).$
	Let $B_1 = rsv,$ $B_2= vstu,$ and $B_3=uvxy.$ 
	It follows from Lemma~\ref{config}~(2) that $V(B_1) \cap V(B_2) = \{s, v\}$ 
	and $V(B_2) \cap V(B_3) = \{u,v\}.$    If $r=x,$ then $d(v)=3,$ contrary to Lemma~\ref{minimum}. 
	If $r=y,$ then $vuy$ is a separating $3$-cycle, contrary to Lemma~\ref{separating}. 
	Thus $V(B_1) \cap V(B_3) =\{v\}.$ 
	Altogether we have $F=C(3,4,4),$ contrary to Lemma~\ref{forbid1}(3).\\ 	
	\indent (4) Suppose $(d_1,d_2,d_3) = (4,3,5).$
	Let $B_1 = qrsv,$ $B_2= vst,$ and $B_3=vtxyz.$ 
	It follows from Lemma~\ref{config}~(2) that 
	$V(B_1) \cap V(B_2) = \{s, v\}$ and $V(B_2) \cap V(B_3) = \{t,v\}.$    
	We have $C= stxyzv$ is a $6$-cycle with a triangular chord $tv.$ 
	If $\{q,r\}$ and $\{x,y,z\}$ are not disjoint, then $C$ has another chord, contrary to Lemma~\ref{C(3,5)}. 
	Thus $V(B_1) \cap V(B_3) =\{v\}.$ 
	Altogether we have $F=C(4,3,5),$ contrary to Lemma~\ref{forbid1}(2).\\ 
	\indent (5) Let $v$ be a hub and let $w,x,y,z$ be external vertices of $H$ in the cyclic order. 
	Suppose to the contrary that $H$ is adjacent to a face $f$ with 
	$B(f)=wxq, wxqr, wxqrs,$ or  $wxqrst.$ 
	Now we have $ \{w,x\} \subseteq V(H) \cap V(B(f)).$ 
	By Lemma~\ref{forbid1}(5), $V(H) \cap V(B(f)) \neq \{w,x\}.$ 
	If $q=y,$ then $d(x) =3,$ contrary to Lemma~\ref{minimum}. 
	If $r=y,$ then $vwxqyz$ is a $6$-cycle with four triangular chords, contrary to Lemma~\ref{C(3,5)}. 
	If $s=y,$ then $vxw, vxwz, vxwzy,$ and $vxqryz$ are four pairwise adjacent cycles, 
	contrary to $G\in \mathcal{A}.$ 
	If $t=y,$ then $vxw, vxwz, vxwzy,$ and $vxqrsy$  are four pairwise adjacent cycles, 
	contrary to $G\in \mathcal{A}.$ 
	The remaining cases lead to similar contradictions. 
	Thus $f$ is not a $6^-$-face. 
\end{proof}

The following is rephrased from (\cite{Cheng}, Lemma 4).  
\begin{lem}\label{theta} 
	Let $C(m,n)$ in $int(C_0)$ be obtained from a cycle $C=x_1\ldots x_{m+n-2}$	
	with a chord  $x_1x_m$ and	$d(x_1) \leq 5.$ 
	If  $C$ has at most one additional chord $e$ and $e$ is not incident to $x_1,$ 
	then there exists $i\in \{2,\ldots,m+n-2\}$ with $d(x_i) \geq 5.$
\end{lem}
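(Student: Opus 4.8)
The plan is to argue by contradiction inside the minimal counterexample $(G,C_0)$. Suppose that none of $x_2,\ldots,x_{m+n-2}$ has degree at least $5$. Since $C(m,n)$ lies in $int(C_0)$, Lemma~\ref{minimum} forces each such vertex to have degree exactly $4$, while $d(x_1)\le 5$ by hypothesis. Set $S=\{x_1,\ldots,x_{m+n-2}\}$. As the whole theta is interior to $C_0$, the graph $G-S$ still contains the precolored triangle $C_0$ and lies in $\mathcal{A}$, so by minimality the precoloring of $C_0$ extends to an $L$-coloring of $G-S$. I will show that this coloring always extends across $S$, contradicting the choice of $(G,C_0)$; equivalently, that the theta $G[S]$ admits an $L'$-coloring for the residual list assignment $L'$.

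First I would record the residual list sizes. The graph $G[S]$ is the cycle $C$ together with the chord $x_1x_m$, and possibly the extra chord $e$; write $\deg_\theta$ for degree in $G[S]$. Each vertex $v\neq x_1$ has at most $d(v)-\deg_\theta(v)\le 4-\deg_\theta(v)$ neighbours outside $S$, so $|L'(v)|\ge \deg_\theta(v)$. In particular $|L'(x_m)|\ge 3$, the endpoints of $e$ (if present) have lists of size at least $3$, and every remaining cycle vertex has a list of size at least $2$. At $x_1$, however, $d(x_1)\le 5$ only yields $|L'(x_1)|\ge 2=\deg_\theta(x_1)-1$, so $x_1$ is the single deficient vertex and is the source of all the difficulty.

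To colour $G[S]$ I would split into two cases according to whether $L'(x_1)\subseteq L'(x_m)$. If some colour $c\in L'(x_1)\setminus L'(x_m)$ exists, colour $x_1$ with $c$, then colour the two internal paths $x_2\cdots x_{m-1}$ and $x_{m+1}\cdots x_{m+n-2}$ greedily, starting from the ends adjacent to $x_1$ and moving towards $x_m$; each interior vertex has a list of size at least $2$ and only one already-coloured neighbour at each step, so nothing is blocked. Finally $x_m$ sees the colours of $x_{m-1}$ and $x_{m+1}$ but not $c$ (as $c\notin L'(x_m)$), hence is forbidden at most two colours out of at least three and can be coloured. In the complementary case $L'(x_1)\subseteq L'(x_m)$ there is a colour $c^{\ast}\in L'(x_m)\setminus L'(x_1)$; I would colour $x_m$ with $c^{\ast}$, colour the two internal paths starting from the ends adjacent to $x_m$, and finish at $x_1$, which by then is constrained only by its two path-neighbours $x_2$ and $x_{m+n-2}$ (the chord $x_1x_m$ imposes nothing, since $c^{\ast}\notin L'(x_1)$).

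The main obstacle is precisely this last step: $x_1$ carries only a size-$2$ list, so it is blocked exactly when $x_2$ and $x_{m+n-2}$ receive the two distinct colours of $L'(x_1)$. I would remove this by exploiting the freedom in the two path-colourings — the free choice at the $x_m$-end of each path propagates to give more than one admissible value for the pair $\bigl(c(x_2),c(x_{m+n-2})\bigr)$, so it cannot be forced into the single blocking pattern — and, when the extra chord $e$ is present, by using that $e$ closes one of the paths into a cycle whose residual list sizes satisfy the hypotheses of Lemma~\ref{listcycle}, which destroys any rigidity. The delicate bookkeeping is to verify that the propagated freedom genuinely avoids the one bad pair in every configuration of the size-$2$ lists; should the lists be so rigid that the blocking pattern seems forced, the resulting concrete subconfiguration can be ruled out against $G\in\mathcal{A}$ through Lemma~\ref{forbid1} or Lemma~\ref{C(3,5)}. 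In every case $G[S]$ is $L'$-colourable, the $L$-coloring of $G-S$ extends to $G$, and the counterexample collapses.
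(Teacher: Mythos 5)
You should first be aware that the paper does not actually prove Lemma~\ref{theta}: it is stated as a rephrasing of Lemma~4 of \cite{Cheng} and used as a black box, so there is no in-paper proof to compare against. Your framework is nevertheless the natural one and matches the technique the paper uses for Lemma~\ref{faces} and Lemma~\ref{strange}: delete $S=\{x_1,\dots,x_{m+n-2}\}$, extend the precoloring of $C_0$ to $G-S$ by minimality, and colour the theta graph from the residual lists, whose sizes you compute correctly ($|L'(v)|\ge \deg_{G[S]}(v)$ for $v\neq x_1$, and $|L'(x_1)|\ge 2$ while $\deg_{G[S]}(x_1)=3$). Your Case~1 (some $c\in L'(x_1)\setminus L'(x_m)$) is complete and correct.

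Case~2 contains a genuine gap. The claim that the ``free choice at the $x_m$-end of each path propagates'' to yield more than one admissible value for the pair $\bigl(c(x_2),c(x_{m+n-2})\bigr)$ is false: if $c^{\ast}\in L'(x_{m-1})$ and $|L'(x_{m-1})|=2$, then after colouring $x_m$ with $c^{\ast}$ the vertex $x_{m-1}$ has a \emph{unique} available colour, this rigidity can propagate down the entire path and force $c(x_2)$, the same can happen on the other path, and the two forced colours can be exactly $L'(x_1)$. Concretely, for $C(3,3)$ (the $4$-cycle $x_1x_2x_3x_4$ with chord $x_1x_3$) take $L'(x_1)=\{1,2\}$, $L'(x_2)=\{1,3\}$, $L'(x_3)=\{1,2,3\}$, $L'(x_4)=\{2,3\}$: your procedure sets $x_3=c^{\ast}=3$, forces $x_2=1$ and $x_4=2$, and leaves nothing for $x_1$, even though an $L'$-colouring exists (e.g.\ $x_1=1$, $x_2=3$, $x_3=2$, $x_4=3$). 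So in the rigid case one must give up on colouring $x_m$ with $c^{\ast}$ and argue differently, which is exactly the case analysis your sketch omits. Moreover, your proposed fallback --- ruling out the rigid configurations via Lemma~\ref{forbid1} or Lemma~\ref{C(3,5)} --- cannot work: those lemmas constrain the subgraph structure of $G$, whereas the obstruction here lives entirely in the adversarial list assignment and already arises on the smallest theta graph, which is not a forbidden configuration. As written, the heart of the lemma (the Case~2 list analysis) is missing and must either be carried out in full or cited from \cite{Cheng}.
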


\begin{cor}\label{flaw}  
	If $v$ is a flaw vertex, then we have the followings.\\  
	(1) $v$ is incident to exactly one poor $5$-face.\\
	(2) Each $3$-face that is incident to $v$ is a semi-rich face.  
\end{cor}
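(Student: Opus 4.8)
The plan is to name the data of a flaw vertex $v$ explicitly and then feed the two incident $3$-faces, each paired with the incident poor $5$-face, into Lemma~\ref{theta}. Since $v$ is a $(3,5,3,5^+)$-vertex, write its neighbours in cyclic order as $a,b,c,d$, with incident faces $f_1=vab$ (a $3$-face), $f_2$ (a $5$-face between $vb$ and $vc$), $f_3=vcd$ (a $3$-face), and $f_4$ (a $5^+$-face between $vd$ and $va$); here $f_1,f_3$ are triangles because they are $3$-faces, and by the definition of a flaw vertex all four incident faces are inner faces, so every configuration below lies in $int(C_0)$. The only $5$-faces incident to $v$ are $f_2$ and, when $f_4$ is a $5$-face, also $f_4$, so the poor $5$-face guaranteed by the definition is one of these two.

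First I would record the local structure. By Lemma~\ref{config}(2), a $3$-face and the adjacent $5$-face $f_2$ (sharing the edge $vb$, respectively $vc$) have union equal to $C(3,5)$; concretely $f_1\cup f_2$ is the hexagon $C=x_1x_2x_3x_4x_5x_6=vabstc$ with chord $x_1x_3=vb$, where $f_2=vbstc$. I then apply Lemma~\ref{theta} with $x_1=v$, using $d(v)=4\le 5$. Because $f_1$ and $f_2$ are faces, the interior of $C$ is exactly $f_1\cup f_2$, so any additional chord of $C$ must be drawn outside $C$; a chord incident to $v$ would be $vs$ or $vt$, forcing $s=d$ or $t=d$ and hence a third common vertex of $f_2$ with $f_3$, contrary to Lemma~\ref{config}(2). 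Thus no additional chord meets $x_1$, and a short case check using the forbidden configurations (Lemma~\ref{forbid1} and Lemma~\ref{C(3,5)}) rules out a second external chord, so the hypotheses of Lemma~\ref{theta} are met and some $x_i$ with $i\ge 2$ satisfies $d(x_i)\ge 5$.

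Now I would combine this with poorness. If $f_2$ is the poor $5$-face, then $b,s,t,c$ are all $4^-$-vertices, so the $5^+$-vertex produced above must be $x_2=a$; running the symmetric argument on $f_3\cup f_2$ forces $d$ to be a $5^+$-vertex as well. Since $b,c$ lie on the poor face $f_2$ they are $4^-$-vertices, so each of $f_1=vab$ and $f_3=vcd$ contains exactly one $5^+$-vertex and is therefore semi-rich, giving (2). The case where the poor $5$-face is $f_4$ (so $f_4$ is a $5$-face) is identical after relabelling: pairing $f_4$ with $f_1$ and with $f_3$ forces $b$ and $c$ to be $5^+$-vertices, and again each incident $3$-face is semi-rich. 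For (1), the definition gives at least one incident poor $5$-face; if both $f_2$ and $f_4$ were poor, then poorness of $f_2$ would make $a$ a $5^+$-vertex while poorness of $f_4$ (which contains $a$) would make $a$ a $4^-$-vertex, a contradiction. Hence $v$ is incident to exactly one poor $5$-face.

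The main obstacle I anticipate is the chord bookkeeping needed to invoke Lemma~\ref{theta}: one must verify that each relevant $C(3,5)$ carries no chord at $v$ and at most one further chord. The observation that the two faces already fill the hexagon (so spurious chords are external) reduces the work, but eliminating a second external chord still requires running through the possible chord positions and citing the appropriate forbidden subgraph ($C(3,3,4)$, $C(3,3,5)$, $C(3,4,4^-)$, $C(4,3,5)$, or the $6$-cycle-with-two-chords excluded by Lemma~\ref{C(3,5)}); this enumeration, though routine, is where the care lies.
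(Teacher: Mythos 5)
Your proof is correct and follows essentially the same route as the paper: decompose the neighbourhood into the two $C(3,5)$ configurations $B(f_1)\cup B(f_2)$ and $B(f_2)\cup B(f_3)$ via Lemma~\ref{config}, apply Lemma~\ref{theta} to locate a $5^+$-vertex in each, and use poorness of the $5$-face to push that vertex onto the third corner of each incident triangle. Your extra chord bookkeeping is sound but can be compressed, since Lemma~\ref{C(3,5)} already guarantees that the hexagon with triangular chord has no second chord, so the hypotheses of Lemma~\ref{theta} hold immediately.
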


\begin{proof}
	Let $v$ be incident to inner faces $f_1, f_2, f_3, f_4$ in a cyclic order where $f_1$ and $f_3$ are $3$-faces, 
	$f_2$ is a poor $5$-face, and $f_4$ is a $5^+$-face. By Lemma~\ref{config},  
	$B(f_1) \cup B(f_2)$ and $B(f_2) \cup B(f_3)$ are  $C(3,5).$ 
	It follows from  Lemma~\ref{theta}  that 
	some vertex in $B(f_1) \cup B(f_2)$ and in $B(f_2) \cup B(f_3)$ has degree at least $5.$ 
	Observe that some vertex in $B(f_1)$ and in $B(f_3)$ has degree at least $5$ since $f_2$ is a poor face.\\
	\indent (1) If $f_4$ is also a poor $5$-face, then $f_1$ is a poor face, contrary to the observation above.  
	\indent (2) By observation above, $f_1$ and $f_3$ are not poor $3$-faces. 
	Since $f_2$ is a poor face, we obtain that $f_1$ and $f_3$ are not rich faces. 
\end{proof}

\begin{lem}\label{strange}
	If  $H$ in Figure 2 is in $int(C_0)$ and contains a $5^-$-vertex $v,$  
	then there is another vertex of $H$ with degree at least $5$ in $G.$  
\end{lem}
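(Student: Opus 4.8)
The plan is to fix the labelling supplied by Lemma~\ref{config}(4): write $B(f)=rstuv$ and $B(g)=uvwxy$ with $x=s$, so that $f,g$ are the two $5$-faces of $H$, sharing the edge $uv$ and the vertex $s$. Since $H\subseteq int(C_0)$, Lemma~\ref{minimum} gives $d(z)\ge 4$ for every vertex $z$ of $H$. The edges $st,tu$ of $f$ and $sy,yu$ of $g$ form a $4$-cycle $C'=stuy$, and the first step is to show that the region $R$ that $C'$ bounds on the side away from $f$ and $g$ is not a single face: otherwise $f$ and that face would share the two consecutive edges $st,tu$, forcing $d(t)=2$ and contradicting Lemma~\ref{minimum}. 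Hence $t,u,y$ each send an edge into $R$; since the only possible chords of $C'$ are $su$ and $ty$, and a chordless $C'$ cannot absorb the two edges forced at $t$, the interior of $R$ is nonempty, and by Lemma~\ref{minimum} every interior vertex has degree at least $4$.

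First I would dispose of the cases that reduce directly to the theta lemma. If $ty\in E(G)$ or $su\in E(G)$, then $C'$ carries a chord and is a $C(3,3)$ (or, taking that diagonal together with $f$ or $g$, a $C(3,4)$); likewise, if the face of $G$ in $R$ meeting $f$ along $tu$ is a triangle $tuz$, then $B(f)\cup B(tuz)$ is a $C(3,5)$ with chord $tu$, and symmetrically for $g$ along $uy$. In each of these configurations I would take the prescribed $5^-$-vertex (or the natural chord endpoint, whose degree is at most $5$) as $x_1$ and apply Lemma~\ref{theta} to obtain a further vertex of degree at least $5$; Lemma~\ref{forbid2} and Lemma~\ref{C(3,5)} are then used to guarantee that this vertex is one of $r,s,u,v,w,y$ rather than an auxiliary interior vertex.

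For the remaining case I would argue by contradiction, assuming that every vertex of $H$ except the given $5^-$-vertex has degree exactly $4$, so that in particular all four corners of $C'$ have degree at most $5$. Here $C'$ is chordless and the faces of $R$ incident to $u$ are not triangles. The vertex $u$ lies on the two $5$-faces $f,g$ and on exactly two faces of $R$; reading its four incident face-degrees cyclically and invoking Lemma~\ref{forbid2}(2),(4) (with Lemma~\ref{C(3,5)} controlling the chords of the associated $6$-cycles and Lemma~\ref{separating} forbidding separating triangles) restricts these two $R$-faces so that the neighbourhoods of $t,u,y$ inside $R$ cannot be completed while keeping $s,t,u,y$ all of degree $4$. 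Combined with the degree sum over $R$, this forces one of $s,t,u,y$ up to degree at least $5$, producing the required second high-degree vertex of $H$.

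I expect the hard part to be exactly this last case. Both the theta lemma and the parity of the degree sum over $R$ naturally manufacture a vertex of degree at least $5$, but a priori it may sit in the interior of $R$ rather than on $H$; the real work is to combine Lemma~\ref{forbid2} with Lemma~\ref{C(3,5)} to transfer that conclusion onto one of the vertices $s,t,u,y$, and to verify that the case analysis over the possible faces of $R$ at $u$ (and at $t,y$) is exhaustive.
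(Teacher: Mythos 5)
Your proposal diverges fundamentally from the paper's argument: the paper proves Lemma~\ref{strange} as a \emph{reducibility} statement, not a structural one. It first checks that $H$ is an induced subgraph (each potential extra edge among $V(H)$ would create a separating triangle), then invokes the minimality of the counterexample to $L$-color $G-H$, computes the residual lists ($|L'(s)|=4$, $|L'(u)|\ge 3$, the rest $\ge 2$), and colors $H$ greedily in the explicit order $u,v,r,w,t,s,y$ after choosing the color of $u$ outside $L'(y)$. Note also that the paper places the degree-$\le 5$ vertex at the specific position $v$ of Figure~2 and needs $d(s)=4$ exactly to get $|L'(s)|=4$; your reading of the hypothesis as a generic $5^-$-vertex is already a mismatch with how the lemma is actually established and used.

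The genuine gap in your route is the final case, which you assert rather than prove, and I do not believe it can be closed with the tools you name. If $d(s)=d(t)=d(u)=d(y)=4$, then $s$ sends no edge into the region $R$ inside $stuy$ while $t,u,y$ send $2,1,2$ edges respectively; an Euler-formula count then shows $R$ must contain interior vertices and at least several triangular faces, but nothing forces any of those triangles, or any $5^-$-face of $R$, to be incident to $s,t,u$ or $y$: the faces of $R$ meeting the boundary $4$-cycle can all be $6^+$-faces, in which case Lemmas~\ref{forbid2} and~\ref{C(3,5)} impose no constraint at $u$ (a $(5,5,6^+,6^+)$-vertex violates nothing) and Lemma~\ref{theta} is never triggered along $st$, $tu$, $uy$ or $ys$. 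Even in the subcases where Lemma~\ref{theta} does apply (a chord $su$ or $ty$, or a triangle on $tu$), the $5^+$-vertex it produces may be the auxiliary vertex inside $R$ or the given $5^-$-vertex itself; you flag this transfer problem yourself but offer no mechanism to solve it, and I see none, because the configuration with all of $r,s,t,u,w,y$ of degree $4$ is not locally forbidden --- it is merely reducible. Any correct proof must therefore use the minimality of $(G,C_0)$ through a coloring of $H$, which your argument never does.
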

\begin{proof} 
	First, we show that $H$ is an induced subgraph. 
	Suppose to the contrary that there is an edge $e$ joining vertices in $V(H)$ such that $e \notin E(H).$ 
	If $e=ty,$ then $sty$ or $tuy$ is a separating $3$-cycle. 
	If $e=ux,$ then $stu$  is a separating $3$-cycle. 
	If $e=sv,$ then $rsv$  is a separating $3$-cycle. 
	If $e=rw,$ then  $rvw$ is a separating $3$-cycle. 
	All consequences contradicts Lemma~\ref{separating}. Thus $H$ is an induced subgraph.\\
	\indent Suppose to the contrary that $d(v) \leq 5$ but each of remaining vertices has degree at most $4.$
	By minimality, $G-H$ has an $L$-coloring where $L$ is restricted to $G-H.$ 
	Consider a residual list assignment $L'$ on $H.$
	Since $L$ is a $4$-assignment, 
	we have $|L'(s)| = 4,$  $|L'(u)| \geq 3,$ and $|L'(v)|,|L'(r)|, |L'(t)|, |L'(y)|, |L'(w)| \geq 2.$    
	We begin by choosing a color $c$ from  $L'(u)$ such that 
	$|L'(y)-{c}|\geq 2.$  
	Then we choose colors of $v,r,w,t,s,$ and $y$ in this order, 
	we obtain an $L'$-coloring on $H$. 
	Thus we can extend an $L$-coloring to $G,$ a contradiction. 
\end{proof}

\begin{cor}\label{2faces}
	Let $v$ be a $k$-vertex in $int(C_0)$ 
	with consecutive incident faces $f_1,\ldots,f_k$ where $k\leq 5.$  
	If $f_1$ and $f_2$ are inner $5^-$-faces, 
	then there exists $w \in B(f_1) \cup B(f_2)$ such that $w \neq v$ and $d(w) \geq 5.$ 
\end{cor}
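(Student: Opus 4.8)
The plan is to reduce everything to the structural lemmas already in hand. Since $v$ is a $k$-vertex with $k\le 5$, we have $d(v)\le 5$; set $k_1=d(f_1)$ and $k_2=d(f_2)$, both in $\{3,4,5\}$. As $f_1$ and $f_2$ are consecutive at $v$, they share an edge $e$ incident to $v$, and by Lemma~\ref{config}(1) each boundary $B(f_i)$ is a cycle. I would organize the argument by the value of $k_1+k_2$, in each case identifying $B(f_1)\cup B(f_2)$ with a concrete configuration and then feeding it to Lemma~\ref{theta} or Lemma~\ref{strange}.

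First I would treat $k_1+k_2\le 8$ (the pairs $(3,3),(3,4),(3,5),(4,4)$ up to order). Here Lemma~\ref{config}(2) gives $B(f_1)\cup B(f_2)=C(k_1,k_2)$, and I can label the cycle $x_1\dots x_{k_1+k_2-2}$ so that $x_1=v$ is an endpoint of the defining chord $x_1x_{k_1}=e$. Since $d(x_1)=d(v)\le 5$, Lemma~\ref{theta} produces an index $i\ge 2$ with $d(x_i)\ge 5$; the vertex $w=x_i$ lies in $B(f_1)\cup B(f_2)$, is distinct from $v$, and has $d(w)\ge 5$, as required.

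Next I would handle the two remaining cases, using the \emph{inner} hypothesis to discard the exceptional configurations. For $\{k_1,k_2\}=\{4,5\}$, Lemma~\ref{config}(3) says $B(f_1)\cup B(f_2)$ is either $C(4,5)$ or the configuration $F$ of Figure~1; but $F$ has $tuy=C_0$, so it shares vertices with the unbounded face and hence cannot be built from inner faces. Thus $B(f_1)\cup B(f_2)=C(4,5)$ and I apply Lemma~\ref{theta} exactly as before. For $k_1=k_2=5$, Lemma~\ref{config}(4) gives either $C(5,5)$, again handled by Lemma~\ref{theta}, or the configuration $H$ of Figure~2. In the latter case $H\subseteq int(C_0)$ because $f_1,f_2$ are inner, and the shared-edge endpoint $v$ (matching, after the symmetry $u\leftrightarrow v$, the distinguished vertex of Figure~2) satisfies $d(v)\le 5$; Lemma~\ref{strange} then yields another vertex of $H=B(f_1)\cup B(f_2)$ of degree at least $5$.

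The point requiring care is verifying that the copy of $C(k_1,k_2)$ sitting inside $G$ really satisfies the hypothesis of Lemma~\ref{theta}, namely that the cycle $C$ carries at most one additional chord and that this chord avoids $x_1=v$. Internal chords other than $e$ are impossible, since the region bounded by $C$ is exactly $f_1\cup f_2$; so only external chords can occur, and I would rule out a second such chord (or any chord at $v$) by the now-familiar device of exhibiting four pairwise adjacent $3$-,$4$-,$5$-,$6$-cycles, or a separating $3$-cycle, contradicting $G\in\mathcal{A}$ or Lemma~\ref{separating}, in the spirit of Lemmas~\ref{forbid1}, \ref{C(3,5)}, and \ref{forbid2}. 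This bookkeeping over the few possible chord positions is the only real obstacle; everything else is a direct appeal to the cited lemmas.
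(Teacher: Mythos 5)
Your proposal is correct and follows exactly the route the paper takes: the paper's own proof is the one-line remark that the corollary ``follows directly from Lemmas~\ref{config}, \ref{theta}, and \ref{strange},'' and your case split on $d(f_1)+d(f_2)$ (using Lemma~\ref{config} to identify $B(f_1)\cup B(f_2)$ as $C(k_1,k_2)$ or the exceptional configuration $H$, then invoking Lemma~\ref{theta} or Lemma~\ref{strange}) is just that argument written out in full, including the chord bookkeeping that the paper leaves implicit. No divergence and no gap beyond what the paper itself elides.
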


\begin{proof} 
		The result follows directly from Lemmas ~\ref{config}, \ref{theta}, and \ref{strange}. 
\end{proof}

\begin{cor}\label{5vertex3rich}
	If $v$ is a $5$-vertex in which each incident face is an inner $5^-$-face, 
	then $v$ is incident to at least three rich faces. 
\end{cor}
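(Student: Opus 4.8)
The plan is to show that a $5$-vertex $v$ all of whose five incident faces are inner $5^-$-faces must be incident to at least three rich faces, by ruling out the alternative that at most two of them are rich (equivalently, at least three are poor or semi-rich). First I would invoke Corollary~\ref{2faces}: for every pair of consecutive incident faces $f_i, f_{i+1}$ (indices cyclic), there is a vertex $w \neq v$ on $B(f_i)\cup B(f_j)$ with $d(w)\ge 5$. The key bookkeeping device is to count, for each of the five faces $f_1,\dots,f_5$, how many $5^+$-vertices other than $v$ lie on its boundary. A face is rich when it has at least two such vertices, semi-rich when it has exactly one, and poor when it has none. Since $v$ itself may or may not be a $5^+$-vertex, I would treat it uniformly as one incident vertex and focus the count on the \emph{other} boundary vertices.

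The central step is a propagation argument around the wheel of five faces. By Corollary~\ref{2faces}, each consecutive pair $\{f_i,f_{i+1}\}$ contributes at least one high-degree vertex $w_i \neq v$ lying on $B(f_i)\cup B(f_{i+1})$. If $f_i$ were poor, then this $w_i$ must lie on $B(f_{i+1})$, and likewise the witness $w_{i-1}$ for the pair $\{f_{i-1},f_i\}$ must lie on $B(f_{i-1})$; so a poor face forces both of its neighbours in the cyclic order to contain an extra $5^+$-vertex that is \emph{not} the shared vertex with $f_i$. I would then argue that two poor faces cannot be adjacent: if $f_i$ and $f_{i+1}$ are both poor, then the pair $\{f_i,f_{i+1}\}$ has no $5^+$-vertex other than $v$ on its combined boundary, directly contradicting Corollary~\ref{2faces}. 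Hence the poor faces among $f_1,\dots,f_5$ form an independent set in the $5$-cycle of faces, so there are at most two poor faces. This already reduces the problem to showing the faces that are not poor supply enough richness.

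Next I would bound the number of non-rich faces. Suppose for contradiction that at most two faces are rich, so at least three faces are either poor or semi-rich. Combined with the fact that at most two faces are poor, the distribution must be analysed case by case according to how many poor faces (zero, one, or two) occur and where the semi-rich faces sit relative to them in the cyclic arrangement. In each case I would locate the forced $5^+$-vertices supplied by Corollary~\ref{2faces} applied to every consecutive pair, and check that too few distinct such vertices are available to keep three of the faces from becoming rich: the witnesses for overlapping pairs are constrained to lie on the shared boundaries, and tracking which boundary each witness can occupy forces at least three of the faces to accumulate two high-degree vertices. The configurations where the semi-rich faces would have to share their single $5^+$-vertex with a neighbouring poor face are exactly the ones ruled out by Lemma~\ref{forbid2} and the structural restrictions in Lemma~\ref{config} on how $5^-$-faces may meet.

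The main obstacle I anticipate is the case analysis in the last step: ensuring that the high-degree witnesses guaranteed by Corollary~\ref{2faces} for the five overlapping consecutive pairs really are distributed so as to force three \emph{distinct} rich faces, rather than being double-counted on a single shared vertex. The delicate point is that one $5^+$-vertex lying at the junction of two faces can serve as the witness for only one of those faces' richness unless a \emph{second} high-degree vertex is present; so the argument must carefully separate ``the shared vertex is high-degree'' from ``an additional interior-boundary vertex is high-degree.'' I expect this to be handled by combining the adjacency-free conclusions of Lemma~\ref{forbid2} (which forbid short faces from clustering around $v$) with Corollary~\ref{flaw}, thereby pinning down enough independent witnesses to conclude that at least three of the five incident faces are rich.
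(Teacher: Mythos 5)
Your proposal has a definitional slip that both creates the gap you flag and conceals the fact that you have essentially already proved the statement. You count only the $5^+$-vertices \emph{other than} $v$ on each face and call a face rich when there are at least two of them. But $v$ is itself a $5$-vertex incident to all five faces, so under the paper's definition every incident face already carries one $5^+$-vertex, namely $v$: such a face is rich as soon as it has \emph{one} $5^+$-vertex besides $v$, it is semi-rich exactly when $v$ is its only $5^+$-vertex, and it can never be poor. With your shifted definitions you are attempting to prove the stronger (and unnecessary) claim that three faces each carry two high-degree vertices besides $v$, and the ``case analysis in the last step'' that you identify as the main obstacle is aimed at that stronger claim; it is only sketched, never carried out, so as written the argument is incomplete.

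Once the definitions are corrected, your second paragraph is the entire proof, and it coincides with the paper's. A face incident to $v$ that is not rich is semi-rich with $v$ as its unique $5^+$-vertex; if two such faces $f_i$ and $f_{i+1}$ were consecutive, then every vertex of $B(f_i)\cup B(f_{i+1})$ other than $v$ would have degree exactly $4$ (at least $4$ by Lemma~\ref{minimum} since the faces are inner, at most $4$ by semi-richness), contradicting Corollary~\ref{2faces}. Hence the non-rich faces form an independent set in the $5$-cycle of faces around $v$, so there are at most two of them and at least three rich faces --- this is exactly your ``poor faces are independent'' observation, applied to the correct class of faces. The paper states the same thing contrapositively: at most two rich faces forces two adjacent semi-rich faces, which contradicts Corollary~\ref{2faces}. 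No appeal to Lemma~\ref{config}, Lemma~\ref{forbid2}, or Corollary~\ref{flaw} is needed.
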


\begin{proof}
	Suppose to the contrary that $v$ is incident to at most two rich faces. 
	Consequently, $v$ is incident to two adjacent semi-rich faces $f$ and $g.$ 
	But $d(f) \leq 5$ and $d(g) \leq 5$ while all vertices in $B(f) \cup B(g)$ except $v$ has degree $4.$ 
	This contradicts Corollary~\ref{2faces}. 
\end{proof}

\begin{lem}\label{faces} 
	Let $C(l_1,\ldots,l_k)$ in $int(C_0)$ be obtained from a cycle $C=x_1 \ldots x_m$ 
	with  $k$ internal chords sharing a common endpoint $x_1.$ 
	Suppose $x_1$ is not incident to other chords 
	while $x_2$ or $x_m$ is not incident to any chord.   
	If $d(x_1) \leq k+3,$ then there exists $i \in \{2,3,\ldots,m\}$ such that $d(x_i) \geq 5.$  
\end{lem}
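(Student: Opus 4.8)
The plan is to argue by contradiction using the minimality of $(G,C_0)$. Suppose to the contrary that every vertex $x_i$ with $i\in\{2,\ldots,m\}$ satisfies $d(x_i)\le 4$, and let $H=G[\{x_1,\ldots,x_m\}]$ be the subgraph consisting of $C$ together with its chords. Since the configuration lies in $int(C_0)$, the graph $G-V(C)$ still contains the precolored cycle $C_0$, so by minimality it admits an $L$-coloring $\phi$. Passing to the residual list assignment $L'$ on $V(C)$ as defined in Section~2, it then suffices to produce an $L'$-coloring of $H$: this would extend $\phi$ to an $L$-coloring of $G$, contradicting the choice of $(G,C_0)$ and forcing some $x_i$ to have degree at least $5$.

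First I would record the residual list sizes. Write the chords at $x_1$ as $x_1x_{b_1},\ldots,x_1x_{b_k}$ with $3\le b_1<\cdots<b_k\le m-1$; note that $x_2$ and $x_m$ are joined to $x_1$ only by edges of $C$, so neither is a chord endpoint. Hence the neighbors of $x_1$ inside the configuration are exactly $x_2,x_m,x_{b_1},\ldots,x_{b_k}$, that is $k+2$ of them, so $x_1$ has at most $d(x_1)-(k+2)\le 1$ neighbors outside $V(C)$. This gives $|L'(x_1)|\ge 4-(d(x_1)-(k+2))=k+6-d(x_1)\ge 3$. The same bookkeeping yields $|L'(x_2)|,|L'(x_m)|\ge 2$, each chord endpoint satisfies $|L'(x_{b_j})|\ge 3$, and every other interior vertex $x_i$ has $|L'(x_i)|\ge 2$ while having degree $2$ in $H$.

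The heart of the argument is to color $x_1$ \emph{first}, exploiting the freedom $|L'(x_1)|\ge 3$. After $x_1$ receives a color $c_1$, deleting it leaves the path $P=x_2x_3\cdots x_m$; this is precisely where I would use the hypothesis that one of $x_2,x_m$, say $x_2$, is incident to no chord, so that $x_2$ has degree $1$ in $H-x_1$. Each chord endpoint loses at most $c_1$ and still has a list of size $\ge 2$ with $P$-degree $2$, and every other interior vertex keeps a list of size $\ge 2$. Thus all interior lists of $P$ have size $\ge 2$, and the only vertices that can drop to a single color are the endpoints $x_2,x_m$, and only when they have degree $4$. If at most one endpoint has degree $4$, the other keeps a list of size $\ge 2$ for any choice of $c_1$; if both do, I would choose $c_1\in L'(x_1)\setminus(L'(x_2)\cap L'(x_m))$, which exists since $|L'(x_1)|\ge 3$ while $|L'(x_2)\cap L'(x_m)|\le 2$. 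In every case at least one endpoint retains two available colors, and a greedy sweep of $P$ starting from the (possibly forced) other endpoint yields the desired $L'$-coloring of $H$.

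The step I expect to require the most care is the treatment of chords of $C$ not incident to $x_1$. If the definition of $C(l_1,\ldots,l_k)$ permits such chords (as Lemma~\ref{theta} allows a single additional chord), then $H-x_1$ is no longer a path but a path with extra edges, so the clean greedy sweep must be replaced by an argument that the relevant block is $L'$-colorable. Here I would lean on Lemma~\ref{listcycle} together with the observation that each additional chord endpoint again gains a unit of residual list, keeping every vertex off the deficiency threshold except the chord-free endpoint supplied by the hypothesis. Verifying that this endpoint can always be accommodated — equivalently, that the only $2$-list obstruction of Lemma~\ref{listcycle}, an odd cycle with identical lists, is destroyed by the free choice of $c_1$ at the high-list vertex $x_1$ — is the crux on which the whole reduction rests.
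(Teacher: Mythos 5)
Your proposal is correct and follows essentially the same route as the paper's proof: extend the precoloring of $C_0$ to $G-\{x_1,\ldots,x_m\}$ by minimality, use $d(x_1)\le k+3$ to get $|L'(x_1)|\ge 3$, choose the color of $x_1$ so as to protect the chord-free endpoint among $x_2,x_m$, and finish with a greedy sweep of $x_2,\ldots,x_m$ ending at that endpoint. The worry you raise about chords not incident to $x_1$ resolves exactly as you suspect (each such chord endpoint gains a compensating unit of residual list, so the sweep never stalls), and the paper's proof handles it no more explicitly than you do.
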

\begin{proof} 
	By symmetry, we assume $x_m$ is not an endpoint of any chord in $C.$ 
	Suppose to the contrary that $d(x_i) \leq 4$ for each $i =2,3,\ldots,m.$ 
	By the minimality of $G,$ 
	the subgraph $G-\{x_1,\ldots, x_m\}$ has an $L$-coloring 
	where $L$ is restricted to $G-\{x_1,\ldots, x_m\}$. 
	Consider a residual list assignment $L'$ on $x_1,\ldots, x_m.$
	Since $L$ is a $4$-assignment,  
	we have $|L'(x_1)|\geq 3$ and $|L'(v)| \geq 3$ for each $v\in V(C)$ with an edge $x_1v$   
	and $|L'(x_i)| \geq 2$ for each of the remaining vertices $x_i$ in $V(C).$  
	Since $x_m$ is not an endpoint of a chord in $C,$ we can choose a color $c$ from  $L'(x_1)$ such that 
	$|L'(x_{m})-{c}|\geq 2.$  
	By choosing colors of $x_2$, $x_3,\dots, x_{m}$ in this order, 
	we obtain an $L'$-coloring on $G'$. 
	Thus we can extend an $L$-coloring to $G,$ a contradiction. 
\end{proof}

\begin{cor}\label{C(5,3,5,3)} 
	Let $v$ be a $6$-vertex with consecutive incident faces $f_1,\ldots,f_6$ 
	and let $F = B_1 \cup B_2 \cup B_3\cup B_4$ where $B_i$ denote $B(f_i).$ 
	If $f_1\ldots f_4$ are bounded faces and $(d(f_1),d(f_2),d(f_3),d(f_4))= (5,3,5,3),$ 
	then there exists $w \in V(F)-\{v\}$ with $d(w) \geq 5.$
\end{cor}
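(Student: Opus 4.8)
The plan is to exhibit $F$ as a copy of $C(5,3,5,3)$ whose central vertex is $v$, and then to invoke Lemma~\ref{faces}. First I would fix notation: let $u_1,\dots,u_6$ be the neighbors of $v$ in cyclic order, with $f_i$ the face bounded by the edges $vu_i$ and $vu_{i+1}$ (indices mod $6$). Since $f_2$ and $f_4$ are $3$-faces, their boundaries are $B_2=vu_2u_3$ and $B_4=vu_4u_5$; since $f_1$ and $f_3$ are $5$-faces, I would write $B_1=vu_1pqu_2$ and $B_3=vu_3rsu_4$ for suitable vertices $p,q,r,s$. The goal of the first stage is to show that the only identifications forced among these labels are the shared chord-endpoints, so that the outer boundary is a genuine $10$-cycle $C=vu_1pqu_2u_3rsu_4u_5$ carrying the three chords $vu_2,vu_3,vu_4$; that is, $F=C(5,3,5,3)$ sitting inside $int(C_0)$.

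To establish this I would argue exactly as in the proof of Lemma~\ref{forbid2}. Each consecutive pair $(f_1,f_2),(f_2,f_3),(f_3,f_4)$ has face-degree sum $8$, so Lemma~\ref{config}(2) applies with no exceptional case (the exceptions in parts (3) and (4) concern only $(4,5)$- and $(5,5)$-pairs), giving $V(B_1)\cap V(B_2)=\{v,u_2\}$, $V(B_2)\cap V(B_3)=\{v,u_3\}$, and $V(B_3)\cap V(B_4)=\{v,u_4\}$. It then remains to rule out any further coincidence among the non-adjacent boundaries, namely $V(B_1)\cap V(B_3)=V(B_1)\cap V(B_4)=V(B_2)\cap V(B_4)=\{v\}$. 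Each putative extra identification either forces a vertex of degree $2$ or $3$ (contradicting Lemma~\ref{minimum}), produces a separating $3$-cycle (contradicting Lemma~\ref{separating}), or creates a $6$-cycle with two triangular chords or one of the pairwise-adjacent-cycle patterns forbidden by Lemmas~\ref{C(3,5)} and \ref{forbid1}; this is the same bookkeeping carried out for $C(3,3,5)$ and $C(4,3,5)$ in Lemma~\ref{forbid2}. Once all these are excluded, $F=C(5,3,5,3)$ with hub $v$.

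With the structure in hand I would check the hypotheses of Lemma~\ref{faces}. The central vertex $x_1=v$ carries exactly the three chords $vu_2,vu_3,vu_4$; its sixth edge $vu_6$ runs to a vertex outside $C$ (the vertices $p,q,r,s$ are non-neighbors of $v$, and $u_6\notin\{u_1,\dots,u_5\}$), so $v$ is incident to no other chord of $C$. Hence there are $k=3$ chords through $x_1$ and $d(v)=6=k+3$, meeting the degree hypothesis exactly. Taking $x_2,x_m$ to be the two cycle-neighbors $u_1,u_5$ of $v$, Lemma~\ref{faces} then produces a vertex $x_i\in V(C)-\{v\}=V(F)-\{v\}$ with $d(x_i)\geq5$, which is the desired $w$.

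The hard part will be the final hypothesis of Lemma~\ref{faces}, namely that one of the two end-vertices $u_1,u_5$ is incident to \emph{no} chord of $C$. Because the interior of $C$ is tiled by the four faces, any extra chord must lie outside $C$; I expect to exclude a simultaneous external chord at both $u_1$ and $u_5$ by the same forbidden-configuration analysis (Lemmas~\ref{forbid1} and \ref{C(3,5)}) used above, and then to let the chord-free end play the role of $x_m$. Verifying that this dichotomy always holds—rather than merely one orientation working—is the only place where genuine case-checking beyond a direct citation of the earlier lemmas is needed.
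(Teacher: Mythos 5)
Your plan is essentially the paper's proof: show $F=C(5,3,5,3)$ by combining Lemma~\ref{config}(2) with an exclusion of further vertex identifications, then apply Lemma~\ref{faces} with $x_1=v$, $k=3$ chords and $d(v)=6=k+3$. Two remarks on the points you leave open. First, among the ``further coincidences,'' the one case your generic bookkeeping does not cover by a direct citation is the identification of the two vertices at distance two from $v$ along each $5$-face (the paper's $r=x$): this creates no extra chord of the $6$-cycles $vqrstu$ or $vtuwxy$, so Lemma~\ref{C(3,5)} is silent and no configuration of Lemma~\ref{forbid1} literally applies; the paper instead exhibits the four pairwise adjacent cycles $vyz$, $vyxq$, $vyxwu$, $vyrstu$ ad hoc, while every other coincidence does fall to Lemma~\ref{C(3,5)} or to Lemmas~\ref{minimum} and \ref{separating} as you predict. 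Second, the hypothesis of Lemma~\ref{faces} that you single out as the ``hard part''---that $x_2$ or $x_m$ is incident to no (here necessarily external) chord---is not verified in the paper at all: its proof of this corollary stops once $F=C(5,3,5,3)$ is established and cites Lemma~\ref{faces} directly, so on this point your proposal is more scrupulous than the source, and completing it would indeed require the extra chord-exclusion argument you describe.
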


\begin{proof}  
	By Lemma~\ref{faces}, it suffices to show that $F = C(5,3,5,3).$ 
	Let cycles $B_1= vqrst, B_2=vtu,$ $B_3=vuwxy,$ and $B_4=vyz.$ 
	Using Lemma~\ref{config}, we have that $V(B_1) \cap V(B_2)=\{v,t\},$ 
	$V(B_2) \cap V(B_3)=\{v,u\},$ and  $V(B_3) \cap V(B_4)=\{v,y\}.$ 
	It suffices to show that $V(B_1) \cap V(B_3)=\{v\}$ $=V(B_4) \cap (V(B_1) \cup V(B_2)).$\\
	\indent Suppose to the contrary that $V(B_1) \cap V(B_3)\neq \{v\}.$ 
	Consider a $6$-cycle $vtuwxy$ with a triangular chord $uv.$ 
	If $s=u,w,x,$ or $y,$ then $vtuwxy$ has another chord, contrary to Lemma~\ref{C(3,5)}. 
	Thus $s \notin V(B_1) \cap V(B_3).$ Similarly each of $q, w,$ and $y$ is not in $V(B_1) \cap V(B_3).$ 
	The only remaining possibility is that $r=x.$ Suppose this holds. 
	Then $vyz, vyxq, vyxwu,$ and $vyrstu$ are four pairwise adjacent cycle, contrary to $G \in \mathcal{A}.$ 
	Thus $V(B_1) \cap V(B_3)=\{v\}$ which implies $B_1 \cup B_2 \cup B_3 = C(5,3,5).$ 
	As a consequence, we have $vqrstu$ and $vtuwxy$ are $6$-cycles with a triangular chord.\\ 
	\indent 	If there is a vertex $b \in V(B_4) \cap (V(B_1) \cup V(B_2))$ such that $b \neq v,$ 
	then $vqrstu$ or $vtuwxy$ has another chord, contrary to Lemma~\ref{C(3,5)}. 
	This completes the proof. 
\end{proof}

\begin{cor}\label{W5}  
	Let $v$ be a $4$-vertex incident to four inner $3$-faces. 
	If all four neighbors of  $v$ are $5^-$-vertices, 
	then  at least three of them are $5$-vertices. 
\end{cor}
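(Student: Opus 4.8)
The plan is to show that at most one of the four neighbours is a $4$-vertex. Write $v$ for the hub; since $v$ is a $4$-vertex bounded by four $3$-faces, its neighbours form a $4$-cycle, say $wxyz$ in cyclic order, and the four inner $3$-faces are $vwx,vxy,vyz,vzw$, so $\{v,w,x,y,z\}$ spans a copy of $W_5$. Because all four faces are inner, none of $w,x,y,z$ lies on $C_0$, so Lemma~\ref{minimum} gives each of them degree at least $4$; together with the hypothesis that they are $5^-$-vertices, every neighbour is a $4$- or a $5$-vertex. It therefore suffices to rule out two of them being $4$-vertices.

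First I would extract ``opposite pair'' constraints from Corollary~\ref{2faces}. Each external vertex, say $x$, is incident to the two inner $3$-faces $vwx$ and $vxy$; applying Corollary~\ref{2faces} to $x$ (legitimate since $d(x)\le 5$ and both faces are inner $5^-$-faces) yields a vertex of $B(vwx)\cup B(vxy)=\{v,w,x,y\}$, distinct from $x$, of degree at least $5$. As $d(v)=4$, this forces $w$ or $y$ to be a $5$-vertex. Running the same argument at each of $w,x,y,z$ shows that each of the two opposite pairs $\{w,y\}$ and $\{x,z\}$ contains a $5$-vertex. Hence neither opposite pair can consist of two $4$-vertices, so there are at most two $4$-vertices and, if there are exactly two, they are consecutive on the $4$-cycle.

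It then remains to exclude the consecutive case; say $w,x$ are $4$-vertices and $y,z$ are $5$-vertices. Here I would delete the whole wheel $H=\{v,w,x,y,z\}$, $L$-colour $G-H$ by minimality, and pass to the residual list assignment $L'$. Counting neighbours outside $H$ (none for $v$, one each for $w,x$, two each for $y,z$) gives $|L'(v)|=4$, $|L'(w)|,|L'(x)|\ge 3$, and $|L'(y)|,|L'(z)|\ge 2$; I then colour $W_5$ by colouring the external $4$-cycle first and the hub last. If $L'(w)\cap L'(y)\neq\emptyset$ or $L'(x)\cap L'(z)\neq\emptyset$, I can make one opposite pair monochromatic (the vertices of a pair are non-adjacent) and complete the $4$-cycle with at most three colours, leaving a colour for $v$ from its $4$-element list.

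The hard part will be the remaining ``rainbow'' case, in which both opposite pairs have disjoint lists and every proper colouring of the $4$-cycle uses four distinct colours. There I would exploit that $|L'(w)\cup L'(y)|\ge 5>|L'(v)|$, pick a colour $c^\ast\in(L'(w)\cup L'(y))\setminus L'(v)$, assign it to whichever of $w,y$ owns it, and complete the $4$-cycle (using again that the opposite vertices $x,z$ are non-adjacent and so may repeat a colour, which keeps the remaining small lists from being exhausted). Any such completion places the colour $c^\ast\notin L'(v)$ on a neighbour of $v$, so the four neighbours of $v$ never cover $L'(v)$ and $v$ can be coloured, contradicting the minimality of $(G,C_0)$. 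This contradiction rules out two consecutive $4$-vertices and completes the proof.
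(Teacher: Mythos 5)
Your proof is correct and follows the same skeleton as the paper's: first force any two $4$-vertices among the neighbours of $v$ to be consecutive on the outer $4$-cycle, then delete the induced wheel and finish with a residual-list argument using exactly the same list sizes $4,3,3,2,2$. For the first step you invoke Corollary~\ref{2faces} at each external vertex, whereas the paper applies Lemma~\ref{theta} directly to $B_1\cup B_2=C(3,3)$ when the two $4$-vertices are opposite; since Corollary~\ref{2faces} is itself derived from Lemma~\ref{theta}, this is only a cosmetic difference. The endgame is organised differently in a genuine way: the paper colours the hub \emph{first}, splitting on whether $L'(v)\subseteq L'(y)\cup L'(z)$ and finishing the even outer cycle with Lemma~\ref{listcycle}, while you colour the outer $4$-cycle first and the hub \emph{last}, splitting on whether an opposite pair of lists meets; your ``escape colour'' $c^\ast\in\bigl(L'(w)\cup L'(y)\bigr)\setminus L'(v)$ in the disjoint case is a clean alternative, and both of your cases check out (in the intersecting case the cycle uses at most three colours, and in the disjoint case a colour outside $L'(v)$ is forced onto a neighbour of $v$). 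One point you should make explicit rather than assert parenthetically: the non-adjacency of the opposite pairs $\{w,y\}$ and $\{x,z\}$ --- which you need both to see that $H$ is the induced wheel with the stated residual list sizes and to make an opposite pair monochromatic --- follows from Lemma~\ref{separating}, since an edge $wy$ would make $vwy$ a separating $3$-cycle (with $x$ and $z$ on opposite sides); the paper records exactly this observation at the start of its proof.
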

\begin{proof} 
	Let $w, x, y, z$ be  neighbor of $v$ in a cyclic order. 
	Let cycles $B_1=vwx$ and $B_2=vxy.$ 
	Note that $w$ and $y$ are not adjacent, otherwise $vwy$ is a separating $3$-cycle, 
	contrary to Lemma~\ref{separating}.
	Similarly, $x$ and $z$ are not adjacent.\\ 
	\indent Suppose to the contrary that there are at least two $4$-vertices among $w,x,y,$ and $z.$ 
	If those two $4$-vertices are not adjacent, say $w$ and $y,$ 
	then $B_1 \cup B_2$ contradicts Lemma~\ref{theta}. 
	Thus we assume that $w$ and $x$ are $4$-vertices.\\
	\indent Let $H$ be the graph induced by $v$ and its neighbors. 
	By minimality of $G,$ the graph $G -H$ has an $L$-coloring 
	where $L$ is restricted to $G -H.$ 
	Consider a residual list assignment $L'$ on $H.$ 
	Since $L$ is a $4$-assignment, 	we have $|L'(y)|, |L'(z)|\geq 2,$ 
	$|L'(w)|, |L'(x)|\geq 3,$ and $|L'(v)|= 4.$ 
	It suffices to assume that equalities holds for these list sizes. 
	We aim to show that $H$ has an $L'$-coloring,  
	and thus an $L$-coloring can be extended to $G,$ a contradiction.\\  
	\indent CASE 1. There is a color $t$ in $L'(v)- (L'(y)\cup L'(z))$.\\  
	We begin by choosing  $t$ for  $v.$ 
	Each of the residual lists of $w,x,y,z$ now has sizes at least $2.$ 	
	By Lemma~\ref{listcycle}, an even cycle is $2$-choosable, thus $H$ has an $L'$-coloring.\\  
	\indent CASE 2. $L'(v)- (L'(y)\cup L'(z))=\emptyset.$\\   
	This implies $L'(y)\cap L'(z)=\emptyset.$ 
	Choose $t\in L'(v) -L'(w)$ for $v.$ 
	If $t\in L'(y),$ then $t \notin L'(z)$ and we can color  $y,x,z,$ and $w$  in this order, 
	otherwise we can color  $z,y,x,$ and $w$  in this order.
	Thus $H$ has an $L'$-coloring. 
	This contradiction completes the proof. 
\end{proof}

\section{Proof of Theorem~\ref{strong}} 

\indent 
Let the initial charge of a vertex $u$ in $G$ be $\mu(u)=2d(u)-6$ 
and the initial charge of a face $f$ in $G$ be $\mu(f)=d(f)-6$. 
Then by Euler's formula $|V(G)|-|E(G)|+|F(G)|=2$ and by the Handshaking lemma, we have
$$\displaystyle\sum_{u\in V(G)}\mu(u)+\displaystyle\sum_{f\in F(G)}\mu(f)=-12.$$ 
\indent Now we design the discharging rule transferring charge 
from one element to another to provide a new charge $\mu^*(x)$ 
for all $x\in V(G)\cup F(G).$ The total of new charges remains $-12$.  
If the final charge  $\mu^*(x)\geq 0$ for all $x\in V(G)\cup F(G)$, 
then we get a contradiction and complete the proof.\\ 

Before we establish a discharging rule, some definitions are required. 
A graph $C(3,3,3)$  in $int(C_0$ is called a \emph{trio} .  
A vertex that is not in any trio is called a \emph{good} vertex. 
We call a vertex $v$ incident to a face $f$ in a trio $T$ 
a \emph{bad} (\emph{worse}, \emph{worst}, respectively) vertex of $f$ if $v$ is incident to exactly one 
(two, three, respectively) $3$-face(s) in $T.$ 
We call a face $f$ in a trio $T$ a \emph{bad} (\emph{worse},\emph{worst}, respectively) face  
of a vertex $v$ if $v$ is a bad (worse, worst, respectively) vertex of  $f$ in $T.$  
For our purpose, we regard an external vertex of  $W_5$ 
as a worse vertex of its incident $3$-faces in $W_5.$

\indent Let $w(v \rightarrow f)$ be the charge transferred from 
a vertex $v$ to an incident face $f.$ 
From now on, a vertex $v$ is in $int(C_0)$ unless stated otherwise. 
The discharging rules are as follows.\\
\textbf{(R1)} Let $f$ be an inner $3$-face that is not adjacent to another $3$-face.\\
\indent \textbf{(R1.1)} For a $4$-vertex $v$,\\
\indent \indent 
$
 w(v \rightarrow f) =
  \begin{cases}
   \frac{9}{10},        & \text{if } v \text{ is flaw},\\
   1,        & \text{otherwise.}
  \end{cases}
$\\
\indent \textbf{(R1.2)} For a $5^+$-vertex $v$,\\ 
\indent \indent 
$
 w(v \rightarrow f) =
  \begin{cases}

     \frac{6}{5},       & \text{if } f  \text{ is a } (4,4,5^+)\text{-face},\\
   1,        & \text{otherwise.}
  \end{cases}
$\\
\textbf{(R2)} Let $f$ be an inner $3$-face that is adjacent to another $3$-face.\\
\indent \textbf{(R2.1)} For a $4$-vertex $v$,\\ 
\indent \indent 
$
 w(v \rightarrow f) =
  \begin{cases}
   \frac{1}{2},        & \text{if } v \text{ is a hub of } W_5, \\
   1,        & \text{if } f \text{ is a good, bad, or worse face of } v,\\
   \frac{2}{3},        & \text{if } f \text{ is a worst face of } v.
  \end{cases}
$\\
\indent \textbf{(R2.2)} For a $5$-vertex $v$,\\
\indent \indent 
$
 w(v \rightarrow f) =
  \begin{cases}
   1,        & \text{if } f \text{ is a good or worst face of } v,\\
      \frac{5}{4},        & \text{if } f \text{ is a worse face of } v,\\
   \frac{3}{2},        & \text{if } f \text{ is a bad face of } v.
  \end{cases}
$\\
\indent \textbf{(R2.3)} For a $6^+$-vertex $v$,\\
\indent \indent 
$
 w(v \rightarrow f) =
  \begin{cases}
   1,        & \text{if } f \text{ is a good or worst face of } v,\\
   \frac{3}{2},        & \text{if } f \text{ is a bad or worse face of } v.
  \end{cases}
$\\
\textbf{(R3)} Let $f$ be an inner $4$-face.\\
\indent \textbf{(R3.1)} For a $4$-vertex $v$, let $w(v \rightarrow f) = \frac{1}{3}$.\\
\indent \textbf{(R3.2)} For a $5^+$-vertex $v$,\\
\indent \indent 
$
w(v \rightarrow f) =
  \begin{cases}
   1,        & \text{if } f \text{ is a } (4,4,4,5^+)\text{-face},\\
   \frac{2}{3},   & \text{if } f  \text{ is rich. } 
  \end{cases}
$\\
\textbf{(R4)} Let $f$ be an inner $5$-face.\\
\indent \textbf{(R4.1)} For a $4$-vertex $v$,\\ 
\indent \indent
$
 w(v \rightarrow f) =
  \begin{cases}
   \frac{1}{5},        & \text{if } v  \text{ is  flaw and } f \text{ is a poor } 5\text{-face},\\
       \frac{1}{3},        & \text{if } v  \text{ is  incident to at most one } 3\text{-face,}\\
     0,        & \text{otherwise.}
  \end{cases}
$\\
\indent \textbf{(R4.2)} For a $5^+$-vertex $v$,\\
\indent \indent 
$
 w(v \rightarrow f) =
  \begin{cases}
   1,        & \text{if } f \text{ is a } (4,4,4,4,5^+)\text{-face adjacent to five  } 3\text{-faces},\\
   \frac{2}{3},        & \text{if } f \text{ is a } (4,4,4,4,5^+)\text{-face  adjacent to at least one  } 4^+\text{-face other than } f,\\
     \frac{1}{t},        & \text{if } f \text{ is a rich face  with } t \text{ incident } 5^+\text{-vertices.}\\
  \end{cases}
$\\
\textbf{(R5)} Let $f$ be an inner $3$-face incident to a $4$-vertex $v$ 
that is incident to four $3$-faces. 
If $f$ is adjacent to  a $7^+$-face $g,$ we let $w(g \rightarrow f) = \frac{1}{8}$. \\
\textbf{(R6)} The unbounded face $D$ gets $\mu(v)$ from each incident vertex.\\
\textbf{(R7)} Let $f$ be an extreme face.\\
\indent \indent 
$
 w(x \rightarrow f) =
  \begin{cases}
  \frac{5}{2},        & \text{if } f \text{ is a 3-face that  } B(f) \text{ shares an edge with } C_0 \text{ and } x = D,\\
   2,        & \text{if } f \text{ is a 4- or 5-face and } x = D,\\
    2,        & \text{if } f \text{ is  a 3-face that  } B(f) \text{ shares exactly one vertex with }  C_0  
\text{ and } x = D,\\
             \frac{1}{2},        & \text{if } f \text{ is a 3-face and }  x \text{ is a vertex  in } int(C_0),\\ 
       0,        & \text{otherwise.}\\ 
  \end{cases}
$\\

\textbf{(R8)} After (R1) to (R7),  
redistribute the total of charges of $3$-faces in 
the same cluster of at least three adjacent inner $3$-faces (trio or $W_5$) equally among 
its $3$-faces.

\indent It remains to show that resulting $\mu^*(x)\geq 0$ 
for all $x\in V(G)\cup F(G)$. 
Let $v$ be a $k$-vertex incident to faces $f_1,\ldots,f_k$ in a cyclic order.  
By (R6), we only consider $v$ in $int(C_0).$ 
Consider the following cases. 
\begin{itemize} 
\item[(1)] $v$ is a $4$-vertex.
	\begin{itemize}
	\item[(1.1)] A vertex $v$ is incident to a $3$-face that is adjacent to another $3$-face.
		\begin{itemize} 
		\item[(1.1.1)] $v$ is incident to at least two consecutive $3$-faces.\\ 
		If $v$ is incident to  four $3$-faces, 
		then $v$ is a hub of $W_5$. 
		Thus $\mu^*(v)\geq \mu(v) - 4\times \frac{1}{2} = 0$ by (R2.1) and (R7). 
		If $v$ is incident to exactly three $3$-faces, say $f_1,f_2,$ and $f_3,$ 
		then $f_4$ is a $6^+$-face by Lemma~\ref{forbid2}(1) , (2). 
		Thus $\mu^*(v)\geq \mu(v) - 3\times \frac{2}{3} = 0$ by (R2.1) and (R7). 
		If $v$ is incident to exactly two $3$-faces, say $f_1$ and $f_2,$ 
		then $f_3$ and $f_4$ are $6^+$-faces by Lemma~\ref{forbid2}(1), (2). 
		Thus $\mu^*(v)\geq \mu(v) - 2\times 1 = 0$ by (R2.1) and (R7).
		\item[(1.1.2)]  $v$ has no adjacent incident $3$-faces.\\
		Let $f_1$ be a $3$-face adjacent to another $3$-cycle. 
		If follows from Lemma~\ref{forbid2}(1) and (2) that $f_2$ and $f_4$ are $6^+$-faces. 
		Then $w(v \rightarrow f_1) \leq  1$ by (R2.1) and (R7), 
		and $w(v \rightarrow f_3) \leq  1$ by (R2.1), (R3.1), (R4.1), and (R7).  
		Thus $\mu^*(v)\geq \mu(v) - 2\times 1 = 0.$
		\end{itemize}
	\item[(1.2)] $v$ is not incident to a $3$-face that is adjacent to another $3$-face 
	and $v$ is adjacent to at most one $3$-face.\\
	Using  the fact that $w(v \rightarrow f_i) \leq  1$ for a $3$-face $f_i$ by (R1.1) and (R7), 
	and $w(v \rightarrow f_i) \leq  \frac{1}{3}$ for each $4^+$-face $f_i$ by (R3.1), (R4.1), and (R7), 
	we obtain that  $\mu^*(v)\geq \mu(v) - 1 - 3\times \frac{1}{3} = 0.$
	\item[(1.3)] $v$ is not incident to a $3$-face that is adjacent to another $3$-face 
	and $v$ is adjacent to at least two $3$-faces.\\
	Consequently, $v$ is incident to exactly two $3$-faces, say $f_1$ and $f_3.$ 
	It follows from Lemma~\ref{forbid2}(3) that $f_2$ and $f_4$ are $5^+$-faces. 
	If $f_2$ is an extreme face, then  $w(v \rightarrow f_2) =0$ by (R7), 
	$w(v \rightarrow f_4)  =  0$ by (R4.1) and (R7), and 
	$w(v \rightarrow f_i)  \leq  1$ for $i=1$ and $3$ by (R1.1) and (R7). 
	Thus $\mu^*(v)\geq \mu(v) - 2\times 1 = 0.$ 
	Now assume that all incident faces of $v$ are inner faces. 
	If $v$ is flaw, then $v$ is incident to exactly one poor $5$-face, 
	say $f_2$ by Corollary~\ref{flaw}(1). 
	$w(v \rightarrow f_i)  =  \frac{9}{10}$ for $i=1$ and $3$ by (R1.1), 
	$w(v \rightarrow f_2)  \leq  \frac{1}{5}$ and  	$w(v \rightarrow f_4) =0$  
	by (R4.1) and (R7). 
	Thus $\mu^*(v) \geq \mu(v) - 2\times\frac{9}{10}-\frac{1}{5} = 0.$
	If $v$ is not flaw, then $w(v \rightarrow f_i)  =  1$ for $i=1$ and $3$ by (R1.1) 
	and 	$w(v \rightarrow f_i)  =0$ for $i=2$ and $4$  by (R4.1). 
	Thus $\mu^*(v) = \mu(v) - 2\times 1= 0.$
	\end{itemize}

\item[(2)] A  $5$-vertex $v$ is  incident to a $3$-face that is adjacent to another $3$-face.
	\begin{itemize} 
	\item[(2.1)]  $v$ has at least two consecutive incident $3$-faces.\\ 
	If $v$ is incident to four $3$-faces say $f_1,f_2,f_3,$ and $f_4$,  
	then one can see that $B(f_1)\cup B(f_2) \cup B(f_3) \cup B(f_4) = C(3,3,3,3).$ 
	But $C(3,3,3,3)$ contains four pairwise adjacent cycles that contradict $G \in \mathcal{A}.$
	Thus $v$ is incident to at most three consecutive $3$-faces.\\
	If $v$ incident to  consecutive three $3$-faces say $f_1,f_2,$ and $f_3$,  
	then $f_4$ and $f_5$ are $6^+$-faces by Lemma~\ref{forbid2}(1) and (2). 
	Thus $\mu^*(v) = \mu(v) - 3\times 1 > 0$ by (R2.2) and (R7).\\
	If $v$ incident to exactly two  consecutive $3$-faces say $f_1$ and $f_2$,  
	then $f_3$ and $f_5$ are $6^+$-faces by Lemma~\ref{forbid2}(1) and (2). 
	Consequently, $w(v \rightarrow f_i)  \leq  \frac{5}{4}$  for $i=1$ and $2,$  
	and $w(v \rightarrow f_4)  \leq  \frac{3}{2}$  by (R1.2), (R3.2), (R4.2), and (R7). 
	Thus $\mu^*(v) \geq \mu(v) -2\times\frac{5}{4}- \frac{3}{2} = 0.$  
	\item[(2.2)]  $v$ has no adjacent incident $3$-faces.\\ 
	Let $f_1$ be a $3$-face  adjacent to another $3$-face. 
	If follows from Lemma~\ref{forbid2}(1) and (2) that $f_2$ and $f_5$ are $6^+$-faces. 
	By (R2.2) and (R7),  $w(v \rightarrow f_1)  \leq  \frac{3}{2}.$ 
	If neither $f_3$ nor $f_4$ are $3$-faces, 
	then $w(v \rightarrow f_i)  \leq  1$  for $i=3$ and $4$ by (R3.2), (R4.2), and (R7). 
	Thus $\mu^*(v) \geq \mu(v) -\frac{3}{2} -2\times 1 > 0.$\\  
	Now assume that $f_3$ is a $3$-face. 
	By the condition of (2.2), $f_4$ is a $4^+$-face 
	which implies $w(v \rightarrow f_4)  \leq  1$   by (R3.2), (R4.2), and (R7).   
	If $f_3$ is adjacent to another $3$-face, then $f_4$ is a $6^+$-face by Lemma~\ref{forbid2}(1) and (2). 
	Moreover, $w(v \rightarrow f_3)  \leq  \frac{3}{2}$ by (R2.2) and (R7). 
	Thus $\mu^*(v) \geq \mu(v) -2\times\frac{3}{2} > 0.$  
	If $f_3$ is not adjacent to another $3$-face, then $w(v \rightarrow f_3)  \leq  1$ by (R2.2) and (R7). 
	Thus $\mu^*(v) \geq \mu(v) -\frac{3}{2}-2\times 1> 0.$  
	\end{itemize}
\item[(3)] A $5$-vertex $v$ is not incident to a $3$-face that is adjacent to another $3$-face 
	and $v$ is incident to at least one $6^+$-face.\\
	Consequently, $v$ is incident to at most two $3$-faces. 
	\begin{itemize}
	\item[(3.1)] $v$ is incident to at least two $6^+$-faces.\\  
	Recall that $w(v \rightarrow f_i)  \leq  \frac{6}{5}$ for each $3$-face $f_i$ by (R1.2) and (R7), 
	and $w(v \rightarrow f_i)  \leq  1$ for each $k$-face $f_i$ where $k=4,5$ by (R3.2), (R4.2), and (R7). 
	If $v$  is incident to $t$ $3$-faces, 
	then there are at most $3-t$ faces $f$ with $d(f)=4$ or $5.$ 
	Thus $\mu^*(v) \geq \mu(v) -t\times \frac{6}{5}- (3-t)\times 1> 0$ by $t\leq 3.$   	
	\item[(3.2)] $v$ is incident to exactly one $6^+$-face and incident to at most one $3$-face.\\ 
	If $v$ has no incident $3$-faces, 
	then $v$ has all incident faces $f$ except one $6^+$-face has $d(f)=4$ or $5.$ 
	Thus $\mu^*(v) \geq \mu(v) -4\times 1= 0$  by (R3.2), (R4.2), and (R7).\\ 
	Assume $v$ is incident to exactly one $3$-face, say $f_1.$ 	
	By Lemma~\ref{forbid2}(3), $v$ is not a $(3,4,4,4,6^+)$- or a $(3,4,4,6^+,4)$-face.  
	Consequently, $v$ has at least one incident $5$-face $f_j.$ 
	Moreover, $f_j$ is adjacent to at least one $4^+$-face. 
	We have $w(v \rightarrow f_1)  \leq  \frac{6}{5}$ by (R1.2) and (R7), 
	$w(v \rightarrow f_j)  \leq  \frac{2}{3}$ by (R4.2) and (R7), 
	and $w(v \rightarrow f_i)  \leq  1$ for each remaining $k$-face $f_i$ 
	where $k=4,5$ by (R3.2), (R4.2), and (R7). 
	Thus $\mu^*(v) \geq \mu(v) -\frac{6}{5}-\frac{2}{3}-2\times1   > 0.$ 
	\item[(3.3)] $v$ is incident to exactly one $6^+$-face and incident to exactly two $3$-faces, 
	say $f_1$ and $f_3.$ \\
	It follows from Lemma~\ref{forbid2}(3) and (4) that  $v$ is  either 
	a $(3,5,3,5,6^+)$-, $(3,5,5,3,6^+)$- or  $(3,5,4,3,6^+)$-vertex.\\ 
	Assume $v$ is  a $(3,5,3,5,6^+)$- or  $(3,5,5,3,6^+)$-vertex. 
	Applying Corollary~\ref{2faces} to $B(f_2) \cup B(f_3),$  
	$v$ has an incident $5$-face $f_j$ which is rich or extreme. 
	Recall that $w(v \rightarrow f_i)  \leq  \frac{6}{5}$ for each $3$-face $f_i$ by (R1.2) and (R7), 
	$w(v \rightarrow f_j)  \leq  \frac{1}{2}$ by (R4.2) and (R7), 
	and $w(v \rightarrow f_i)  \leq  1$ for the remaining $5$-face $f_i$ by (R4.2) and (R7). 
	Thus $\mu^*(v) \geq \mu(v) -2\times\frac{6}{5} -\frac{1}{2}-1 > 0.$\\
	Assume  $v$ is a $(3,5,4,3,6^+)$-vertex. 
	Applying Corollary~\ref{2faces} to $B(f_1) \cup B(f_2),$  
	we obtain that $f_1$ or $f_2$ is rich or extreme. 
	In the former case, $w(v \rightarrow f_1)  \leq  1$  by (R1.2) and (R7), 
	and $w(v \rightarrow f_2)  \leq  \frac{2}{3}$ by (R4.2) and (R7). 
	In the latter case, $w(v \rightarrow f_1)  \leq  \frac{6}{5}$  by (R1.2) and (R7), 
	and $w(v \rightarrow f_2)  \leq  \frac{1}{2}$ by (R4.2) and (R7). 
	Combining with $w(v \rightarrow f_3)  \leq  1$ by (R3.2) and (R7) and 
	$w(v \rightarrow f_4)  \leq  \frac{6}{5}$  by (R1.2) and (R7), 
	we have  $\mu^*(v) \geq \mu(v)  - 2\times1 - \frac{2}{3}-\frac{6}{5} > 0$ or 
	$\mu^*(v) \geq \mu(v) -2\times\frac{6}{5} - \frac{1}{2}- 1  >0 .$  
	\end{itemize} 

\item[(4)] A $5$-vertex $v$ is not incident to a $3$-face that is adjacent to another $3$-face 
	and $v$ is not incident to a $6^+$-face.\\	
	Consequently, $v$ is incident to at most two $3$-faces. 
	Using Corollary~\ref{5vertex3rich}, 
	we have that  $v$ has at least three incident faces that are rich or extreme.  
	\begin{itemize} 
	\item[(4.1)] $v$ has no incident $3$-faces.\\  
	If $f$ has an extreme face $f_i,$ 
	then  $w(v \rightarrow f_i)  =0$ by (R7) 
	and $w(v \rightarrow f_i)  \leq  1$ for each remaining $f_i$ by (R3.2), (R4.2), and (R7).  
	Thus $\mu^*(v) \geq \mu(v) -4\times 1 =0 .$\\ 
	If $f$ has $t$ rich faces, 
	then $\mu^*(v) \geq \mu(v) -t\times \frac{2}{3} -(5-t)\times 1 \geq 0 $ 
	by (R3.2), (R4.2), (R7), and $t\geq 3.$
	\item[(4.2)] $v$ is incident to exactly one $3$-face, say $f_1.$\\ 
	It follows from  Lemma~\ref{forbid2}(3) that $v$ has at most two incident $4$-faces. 
		\begin{itemize}
			\item[(4.2.1)] $v$ has no incident  $4$-faces.\\
			We have that $w(v \rightarrow f_1)  \leq  \frac{6}{5}$  by (R1.2) and (R7) 
			and $w(v \rightarrow f_i)  \leq  \frac{2}{3}$ for each $5$-face $f_i$ by (R4.2)  and  (R7).  
			Thus $\mu^*(v) \geq \mu(v) -\frac{6}{5}-4\times\frac{2}{3}>0.$ 
			\item[(4.2.2)] $v$ has exactly one incident $4$-face.\\ 
			It follows from Lemma~\ref{forbid2}(4) that $v$ is a $(3,5,4,5,5)$-face. 
			Recall that $w(v \rightarrow f_1)  \leq  \frac{6}{5}$  by (R1.2) and (R7), 
			$w(v \rightarrow f_3)  \leq  1$ by (R3.2) and (R7), 
			and $w(v \rightarrow f_i)  \leq  \frac{2}{3}$ for each remaining $f_i$ by (R4.2) and (R7). 
			If $f_3$ is rich, then $w(v \rightarrow f_3)  \leq  \frac{2}{3}$ by (R3.2) and (R7). 
			Thus $\mu^*(v) \geq \mu(v) -\frac{6}{5}-4\times\frac{2}{3}>0.$ 
			If $f_3$ is not rich, then $f_2$ and $f_4$ are rich by Corollary~\ref{2faces}. 
			Consequently, $w(v \rightarrow f_i)  \leq  \frac{1}{2}$ for $i=2$ or $4$ by (R4.2) and (R7). 
		    Thus  $\mu^*(v) \geq \mu(v)-\frac{6}{5}-1-2\times \frac{1}{2}-\frac{2}{3}>0.$  
			\item[(4.2.3)] $v$ has exactly two incident $4$-faces.\\ 
			It follows from Lemma~\ref{forbid2}(3) and (4) that $v$ is a $(3,4,5,5,4)$- or a $(3,5,4,4,5)$-face. 
			Moreover, $v$ has at least three incident rich faces by Corollary~\ref{5vertex3rich}.
			Consequently, we have (i) $f_1$ and at least one $4$-face $f_i$ are rich, 
			(ii) $f_1$ and two $5^+$-faces are rich,  (iii) a $4$-face and two $5$-faces are rich, or
			(iv) two $4$-faces and a $5$-face are rich.\\ 
			Recall that $w(v \rightarrow f_1)  \leq  \frac{6}{5}$  by (R1.2) and (R7), 
			$w(v \rightarrow f_i)  \leq  1$ for each $4$-face $f_i$ by (R3.2) and (R7), 
			and $w(v \rightarrow f_i)  \leq  \frac{2}{3}$ for each $5$-face $f_i$ by (R4.2) and (R7).
			Additionally, $w(v \rightarrow f_1)  \leq  1$  if $f_1$ is rich by (R1.2) and (R7), 
			$w(v \rightarrow f_i)  \leq  \frac{2}{3}$ for each rich $4$-face $f_i$ by (R3.2) and (R7), 
			and $w(v \rightarrow f_i)  \leq  \frac{1}{2}$ for each  rich $5$-face $f_i$ by (R4.2) and (R7). \\ 
			If $f_1$ and a $4$-face $f_i$ are rich,
			then  $\mu^*(v) \geq \mu(v) -2\times 1 -3\times\frac{2}{3}=0.$  
			If $f_1$ and two $5^+$-faces are rich, 
			then  $\mu^*(v) \geq \mu(v) -1-2\times1 -2\times \frac{1}{2} =0.$  
			If a $4$-face and two $5^+$-faces are rich, 
			then $\mu^*(v) \geq \mu(v) -\frac{6}{5}-1-\frac{2}{3} - 2\times \frac{1}{2} >0.$ 
			If two $4$-faces and a $5$-face are rich, 
			then $\mu^*(v) \geq \mu(v) -\frac{6}{5}-3\times\frac{2}{3} -\frac{1}{2}>0.$  
			\end{itemize}
	\item[(4.3)] $v$ is incident to exactly two $3$-faces, say $f_1$ and $f_3.$\\ 
	It follows from Lemma~\ref{forbid2}(3) and (4) that $v$ has no incident $4$-faces. 
	This implies $v$ is a $(3,5,3,5,5)$-vertex. 	
	Recall that $w(v \rightarrow f_i)  \leq  \frac{6}{5}$  for each $3$-face $f_i$ by (R1.2) and (R7), 
	and $w(v \rightarrow f_i)  \leq  \frac{2}{3}$ for each $5$-face $f_i$ by (R4.2) and (R7).
	Additionally, $w(v \rightarrow f_1)  \leq  1$  if $f_1$ is rich by (R1.2) and (R7), 
	and $w(v \rightarrow f_i)  \leq  \frac{1}{2}$ for each  rich $5$-face $f_i$ by (R4.2) and (R7).\\ 
	If each incident $5$-face is rich, 
	then $\mu^*(v) \geq \mu(v) -2\times\frac{6}{5}-3\times\frac{1}{2}>0.$ 
	If $f_2$ is not rich, then $f_1$ and $f_3$ are rich by Corollary~\ref{2faces}. 
	Consequently, $f_4$ and $f_5$ are also rich. 
	Thus $\mu^*(v) \geq \mu(v) -3\times1-2\times\frac{1}{2}=0.$ 
	If $f_4$ is not rich, then  $f_3$ and $f_5$ are rich by Corollary~\ref{2faces}. 
	Consequently, $f_2$ is also rich. 
	Thus  $\mu^*(v) \geq \mu(v) -\frac{6}{5}-1-\frac{2}{3}-2\times\frac{1}{2}>0.$ 
	The case that $f_5$ is not rich is similar. 
	\end{itemize} 
\item[(5)]  A $6$-vertex $v$ is incident to a $3$-face that is adjacent to another $3$-face. 
	\begin{itemize} 
	\item[(5.1)] $v$ is incident to at least two consecutive $3$-faces.\\ 
	Let $f_1,\ldots, f_k$ be consecutive $3$-faces. 
	Similar to Case (2.1), we have $k \leq 3.$ 
	It follows from Lemma~\ref{forbid2}(1) and (2) that $v$ is a $(3,3,6^+,k_4,k_5,6^+)$-
	or $(3,3,3,6^+,k_5,6^+)$-face. 
	Since $w(v \rightarrow f_i)  \leq  \frac{3}{2}$ for each $5^-$-face $f_i$ by (R2.3), (R3.2), (R4.2), and (R7), 
	Thus $\mu^*(v) \geq \mu(v) - 4\times\frac{3}{2} = 0.$ 
	\item[(5.2)] $v$ has no adjacent incident  $3$-faces.\\ 
	Let $f_1$ be a $3$-face adjacent to another $3$-face. 
	It follows from Lemma~\ref{forbid2}(1) and (2) that $f_2$ and $f_6$ are $6^+$-faces. 
	Similar to Case 5.1, we obtain that $\mu^*(v) \geq \mu(v) - 4\times\frac{3}{2} = 0.$ 
	\end{itemize}
\item[(6)] A $6$-vertex $v$ is not incident to a $3$-face that is adjacent to another $3$-face.\\
	Consequently, $v$ is incident to at most three $3$-faces.  
	\begin{itemize}
	\item[(6.1)] $v$ is incident to at least one $6^+$-face.\\ 
	Recall that  $w(v \rightarrow f_i)  \leq  \frac{6}{5}$ for each $3$-face $f_i$ by (R1.2) and (R7), 
		and 	$w(v \rightarrow f_i)  \leq  \frac{3}{2}$ for each $k$-face $f_i$ where $k=4$ or $5$ 
		by (R3.2), (R4.2), and (R7). 
		Thus  $\mu^*(v) \geq \mu(v) - t\times\frac{6}{5}-(5-t) \times 1 >0$ where $t \leq 3$ is 
		the number of incident $3$-faces. 
	\item[(6.2)] $v$ has no incident $6^+$-face.
		\begin{itemize}
		\item[(6.2.1)] $v$ has no incident $3$-faces.\\
		By (R3.2), (R4.2), and (R7), we have $\mu^*(v) \geq \mu(v) - 6\times1= 0.$ 
		\item[(6.2.2)] $v$ has exactly one incident $3$-face, say $f_1.$\\ 
		It follows from Lemma~\ref{forbid2}(3) that $v$ is not a $(3,4,4,4,4,4)$-vertex. 
		Consequently, $v$ has $s$ $5$-faces where $t\geq 1.$ 
		Note that each incident face of $v$ is adjacent to another $4^+$-face.  
		It follows that $w(v \rightarrow f_i)  \leq  \frac{2}{3}$ for each $5$-face $f_i$ by (R4.2) and (R7). 
		Recall that  $w(v \rightarrow f_1)  \leq  \frac{6}{5}$  by (R1.2) and (R7), 
		and 	$w(v \rightarrow f_i)  \leq  1$ for each $4$-face $f.$ 
		Thus $\mu^*(v) \geq \mu(v) - \frac{6}{5}- s \times \frac{2}{3}-(5-s)\times 1  >0.$ 
		\item[(6.2.3)] $v$ has exactly two incident $3$-faces.\\
		Consequently, $v$ is a $(3,k_2,3,k_4,k_5,k_6)$- or $(3,k_2,k_3,3,k_5,k_6)$-vertex.\\ 
		Assume $v$ is a $(3,k_2,3,k_4,k_5,k_6)$-face. 
		Then $k_2=5$  by Lemma~\ref{forbid2}(3). 
		This implies $k_4=k_6=5$  by Lemma~\ref{forbid2}(4). 
		Since $v$ is a $(3,5,3,5,4^+,5)$-vertex,  
		we have $w(v \rightarrow f_i)  \leq  \frac{6}{5}$ for $i=1$ and $3$ by (R1.2) and (R7), 
		$w(v \rightarrow f_i)\leq 1$ for $i=2$ and $5$ by  (R3.2),(R4.2) and (R7), 
		and 	$w(v \rightarrow f_i)\leq \frac{2}{3}$ for $i=4$ and $6$ by  (R4.2) and (R7). 
		Thus $\mu^*(v) \geq \mu(v) - 2\times\frac{6}{5}-2\times1 - 2\times\frac{2}{3} >0.$\\ 
		Assume $v$ is a $(3,k_2,k_3,3,k_5,k_6)$-vertex. 
		It follows from Lemma~\ref{forbid2}(4) that $\{k_2,k_6\} \neq \{4,5\}.$ 
		If $k_2=k_6=4,$ then $k_3=k_5=5$ by   Lemma~\ref{forbid2}(3). 
		Consequently, we may assume that $v$ is a $(3,4,5,3,5,4)$- and $(3,5,5,3,5,5)$-vertex. 
		Recall that $w(v \rightarrow f_i)  \leq  \frac{6}{5}$ for $i=1$ and $4$ by (R1.2) and (R7),  
		$w(v \rightarrow f_i)\leq 1$ for each $4$-face $f_i$ by  (R3.2) and (R7). 
		and $w(v \rightarrow f_i)\leq \frac{2}{3}$ for each $5$-face $f_i$ by  (R4.2) and (R7). 
		Thus  a $(3,4,5,3,5,4)$-vertex has 
		$\mu^*(v) \geq \mu(v) - 2\times\frac{6}{5}-2\times 1 - 2\times\frac{2}{3} >0,$
		and a  $(3,5,5,3,5,5)$-vertex has  
		$\mu^*(v) \geq \mu(v) - 2\times\frac{6}{5}-4\times\frac{2}{3}-1 >0.$ 
		\item[(6.2.4)] $v$ has exactly three incident $3$-faces.\\ 
		Consequently, $v$ is a $(3,5,3,5,3,5)$-vertex  by Lemma~\ref{forbid2}(3). 
		It follows from Corollary~\ref{C(5,3,5,3)} that 
		two incident $5$-faces of $v$, say $f_2,$ and $f_4,$  are rich. 
		This implies $w(v \rightarrow f_i)  \leq  \frac{6}{5}$ for $i=1,3$ and $5$ by (R1.2) and (R7), 
		$w(v \rightarrow f_i)\leq \frac{1}{2}$ for $i=2$ and $4,$ 
		$w(v \rightarrow f_6)  \leq  1$ by  (R4.2) and (R7). 
		Thus  $\mu^*(v) \geq \mu(v) - 3\times\frac{6}{5}-2\times\frac{1}{2}-1 >0.$
		\end{itemize}
	\end{itemize}
\end{itemize}
\begin{itemize} 
\item[(7)] $v$ is a $k$-vertex where $k\geq 7.$ 
	\begin{itemize}
	\item[(7.1)] A vertex $v$ is  incident to a $3$-face that is adjacent to another $3$-face.\\
	Then  $v$ is incident to at least two $6^+$-faces by Lemma~\ref{forbid2}(1) and (2). 
	Thus $\mu^*(v) \geq \mu(v) - (k-2)\times\frac{3}{2}>0$ by (R2.3), (R3.2), (R4.2), and (R7).  
	\item[(7.2)] A vertex $v$ is not incident to a $3$-face that is adjacent to another $3$-face.\\ 
	Consequently $v$ is incident to $t$ $3$-faces where $t \leq k/2.$ 
	Thus $\mu^*(v) \geq \mu(v) -t\times\frac{6}{5} - (k-t) \times 1>0$ by (R1.2), (R3.2), (R4.2), and (R7).
	\end{itemize}

\item[(8)] An inner $3$-face $f$ is not adjacent to another $3$-face.\\
	If $f$ has no incident flaw $4$-vertices,  
	then $\mu^*(f) \geq \mu(f) +3\times1=0$ by   (R1.1) adn (R1.2).
	If $f$ has an incident flaw vertex,   
	then $f$ is a $(4,4,5^+)$-face by Corollary~\ref{flaw}(2). 
	Recall that $w(v \rightarrow f)\geq \frac{9}{10}$ for an incident $4$-vertex $v$ by (R1.1), 
	and $w(v \rightarrow f)\geq \frac{6}{5}$ for an incident $5^+$-vertex $v$   by   (R1.2). 
	Thus $\mu^*(f) \geq \mu(f) +2\times\frac{9}{10}+\frac{6}{5}=0.$   

\item[(9)] An inner $3$-face $f$ is adjacent to another $3$-face.\\ 
	Note that we use only (R2) to calculate a new charge.  
	\begin{itemize}
	\item[(9.1)] A face $f$ is not in a trio.\\
	Then $\mu^*(f) \geq \mu(f) +3\times1=0.$  
	\item[(9.2)] A face $f$ is in a trio $T$ but not in $W_5$ formed by four inner $3$-faces. \\ 
	Let $f_1, f_2,$ and $f_3$ be $3$-faces in the same trio $T.$ 
	Define $\mu(T) := \mu(f_1)+\mu (f_2)+ \mu(f_3) = -9$ and 
	$\mu^*(T) := \mu^*(f_1)+\mu^* (f_2)+ \mu^*(f_3).$ 
	By (R8), it suffices to prove that   $\mu^*(T) \geq 0.$ 
	\begin{itemize}
		\item[(9.2.1)] A worst vertex is a $5^+$-vertex.\\  
		Then  $\mu^*(T) \geq \mu(T) +9\times1 = 0.$   
		\item[(9.2.2)] A worst vertex is a $4$-vertex and each worse vertex is a $4$-vertex.\\ 
		Then two bad vertices are  $5^+$-vertices by Corollary~\ref{2faces}. 
		Thus  $\mu^*(T) \geq \mu(T)+3\times\frac{2}{3}+2\times\frac{3}{2}+4\times1 = 0.$ 
		\item[(9.2.3)] A worst vertex is a $4$-vertex and one of worse vertices is a $5$-vertex.\\  
		Then   Corollary~\ref{2faces} yields that the other worse vertex 
		or at least one bad vertex is a $5^+$-vertex.
		Thus $\mu^*(T) \geq \mu(T) +3\times\frac{2}{3}+4\times\frac{5}{4}+2\times1  = 0$ or 
		$\mu^*(T) \geq \mu(T) +3\times\frac{2}{3}+2\times\frac{5}{4}
		+\frac{3}{2}+3\times1 = 0$, respectively. 
		\item[(9.2.3)] A worst vertex is a $4$-vertex and one of worse vertices is a $6^+$-vertex.\\  
		Then $\mu^*(T) \geq \mu(T) +3\times\frac{2}{3}+2\times\frac{3}{2}+4\times1 = 0.$
		\end{itemize}
	\item[(9.3)] A face $f$ is in $W_5$ formed by four inner $3$-faces incident to $v.$\\ 
	Let $f_1, f_2,f_3,$ and $f_4$ be $3$-faces in the same $W_5.$  
	Define $\mu(W_5) := \mu(f_1)+\mu (f_2)+ \mu(f_3) + \mu(f_4) = -12$ and 
	$\mu^*(W_5) := \mu^*(f_1)+\mu^* (f_2)+ \mu^*(f_3)+ \mu^*(f_4).$  
	By (R8), it suffices to prove that   $\mu^*(W_5) \geq 0.$  
	Note that each $3$-face in $W_5$ is adjacent to a $7^+$-face by Lemma~\ref{forbid2}(5). 
	Thus $W_5$ always obtains $4 \times \frac{1}{8}$ from four $7^+$-faces by (R5). 
		\begin{itemize} 
		\item[(9.3.1)] Each vertex of $W_5$ is  a $5^-$-vertex.\\ 
		Then at least three of them are $5$-vertices by Corollary~\ref{W5}. 
		Thus  $\mu^*(W_5) \geq 
		\mu(W_5) +6\times\frac{5}{4}+2\times1+4\times\frac{1}{2}+4\times\frac{1}{8}= 0.$ 
		\item[(9.3.2)] Exactly one vertex of $W_5$ is a $6^+$-vertex.\\  
		Then one of the remaining vertices is a $5^+$-vertex by Corollary~\ref{2faces}. 
		Thus  $\mu^*(W_5) =\mu(W_5)+2\times\frac{3}{2}+2\times\frac{5}{4}
		+4\times1+4\times\frac{1}{2}+4\times\frac{1}{8}=0.$ 
		\item[(9.3.3)] At least two vertices of $W_5$ are  $6^+$-vertices.\\  
		Then $\mu^*(W_5) \geq \mu(W_5)
		+4\times\frac{3}{2}+4\times1+4\times\frac{1}{2}+4\times\frac{1}{8}> 0.$ 
		\end{itemize}
	\end{itemize}	
 
\item[(10)] $f$ is an inner $4$-face.\\ 
	We claim that $f$ is a $(4^+,4^+,4^+,5^+)$-face. 
	Suppose to the contrary that $f$ is a $(4,4,4,4)$-face. 
	By the minimality of  $G,$ 
	there is an $L$-coloring of $G -B(f)$ where $L$ is restricted to $G-B(f).$ 
	After the coloring, each vertex of $B(f)$ has at least two legal colors. 
	By Lemma~\ref{listcycle}, we can extend an $L$-coloring to $G,$ a contradiction.\\ 
	If $f$ is a $(4,4,4,5^+)$-face, then $\mu^*(f)\geq \mu(f) + 3\times\frac{1}{3}+1 =0$ by (R3).   
	If  $f$ is a $(4^+,4^+,5^+,5^+)$- or $(4^+,5^+,4^+,5^+)$-face, 
	then $f$ is a rich face and thus 
	$\mu^*(f) \geq \mu(f) +2\times\frac{1}{3}+2 \times \frac{2}{3} = 0$ by (R3).  
 
\item[(11)] $f$ is an inner $5$-face.
	\begin{itemize}
	\item[(11.1)] $f$ is a poor $5$-face, that is $f$ is a $(4,4,4,4,4)$-face.\\
	It follows from Lemma~\ref{forbid2}(2) that 
	each incident $4$-vertex of $f$ is incident to at most  two $3$-faces. \\
	If an incident vertex $v$ of $f$ is incident to at most one $3$-face, 
	then $w(v \rightarrow f) = \frac{1}{3}$ by (R4.1), 
	otherwise  $v$ is a flaw vertex by Lemma~\ref{forbid2}(2) and (3) 
	which implies  $w(v \rightarrow f) = \frac{1}{5}$ by (R4.1). 
	Consequently, $f$ gains at least $\frac{1}{5}$ from each incident vertex. 	
	Thus $\mu^*(f)\geq \mu(f) +5\times\frac{1}{5}  = 0.$ 
	\item[(11.2)] $f$ is a $(4,4,4,4,5^+)$-face.
		\begin{itemize} 
		\item[(11.2.1)]$f$ is adjacent to at least one $4^+$-face $g.$\\  
		It follows from (R4.2) that $w(v \rightarrow f) = \frac{2}{3}$ for an incident  $5^+$-vertex $v$ of $f.$ 
		Consider a $4$-vertex $u \in V(B(f)) \cap V(B(g)).$  
		It follows from Lemma~\ref{forbid2}(2) that  $u$ is incident to at most one $3$-face. 
		Consequently,  $w(u \rightarrow f) = \frac{1}{3}$ by (R4.1). 
	 	Thus $\mu^*(f)\geq \mu(f) +\frac{2}{3}+\frac{1}{3}  = 0.$  
		\item[(11.2.2)] $f$ is adjacent to five $3$-faces.\\ 
		Then  $\mu^*(f) = \mu(f) +1  = 0$ by (R4.2).   
		\end{itemize}
	\item[(11.3)] $f$ is  a rich face with $t$ incident $5^+$-vertices.\\ 
	Then  $\mu^*(f)\geq \mu(f)+ t\times\frac{1}{t} = 0$ by (R4.2).  
	\end{itemize}
\item[(12)] $f$ is an inner $6^+$-face.\\ 
	If $f$ is a $6$-face, then $\mu^*(f)=\mu(f) = 0.$   
	If $f$ is a $k$-face where $k \geq 7,$ 
	then $\mu^*(f)\geq \mu(f) - k\times \frac{1}{8} >0$ by  (R5). 
\item[(13)] $f$ is an extreme face. 
	\begin{itemize} 
	\item[(13.1)] $f$ is a $3$-face that shares exactly one vertex, say $u,$ with $C_0.$\\ 
	It follows from (R7) that $w(D \rightarrow f) = 2$ and 
	$w(v \rightarrow f) = \frac{1}{2}$ for each incident vertex $v$ in $int(C_0).$ 
	Thus $\mu^*(f) = \mu(f) +2+2\times\frac{1}{2}  = 0.$ 
	\item[(13.2)] $f$ is a $3$-face that shares an edge  with $C_0.$\\ 
	It follows from (R7) that $w(D \rightarrow f) = \frac{5}{2}$    
	and $w(v \rightarrow f) = \frac{1}{2}$ for an incident vertex $v$ in $int(C_0).$ 
	Thus $\mu^*(f) = \mu(f) +\frac{5}{2} +\frac{1}{2} = 0.$ 
	\item[(13.3)] $f$ is a $4$- or $5$-face.\\ 
	Then $\mu^*(f)\geq \mu(f) +2 \geq 0$ by (R7).  
	\end{itemize}
	
\item[(14)] $D$ is the  unbounded face.\\
Let $f_3', f'$ be the number of $3$-faces sharing exactly one edge with $D$, 
$3$-faces sharing exactly one vertex with $D$ or $4$-or $5$-faces sharing vertices with $D$, respectively. 
Let $E(C_0, V(G)-C_0)$ be the set of edges between $C_0$ and $V(G)-C_0$ 
and let $e(C_0, V(G)-C_0)$ be its size.  Then by (R6) and (R7),
	\begin{align}
	\mu^*(D)&=3+6+\sum_{v\in C_0} (2d(v)-6)-\frac{12}{5}f_3'-2f'\\
	&=9+2\sum_{v\in C_0} (d(v)-2)-2\times3-\frac{12}{5}f_3'-2f'\\
	&=3-\frac{2}{5}f_3'+2(e(C_0,V(G)-C_0)-f_3'-f')
	\end{align}
So we may consider that each edge $e\in E(C_0,V(G)-C_0)$ gives a charge of $2$ to $D.$  
Since each $5^-$-face is a cycle, it contains two edges in $E(C_0,V(G)-C_0)$. 
It follows  that $e(C_0,V(G)-C_0)-f_3'-f'\ge0$. Note that $f_3'\le3$. Thus $\mu^*(D)>0.$ 
\end{itemize}



\begin{thebibliography}{99}

\bibitem{app1} K. Appel, W. Haken, 
Every planar map is four colorable. I. 
Discharging, Illinois J. Math. 21(3)(1977) 429-490.

\bibitem{app2} K. Appel, W. Haken, J. Koch, 
Every planar map is four colorable. II. 
Reducibility, Illinois J. Math. 21(3)(1977) 491-561.

\bibitem{Bo}
O.V. Borodin, A.O. Ivanova, 
Planar graphs without triangular $4$-cycles are $4$-choosable, 
Sib. \`{E}lektron. Mat. Rep. 5(2008) 75-79.

\bibitem{Cheng}
P.P. Cheng, M. Chen, Y.Q. Wang, 
Planar graphs without $4$-cycles adjacent to triangles are $4$-choosable, 
Discrete Math. 339(2016) 3052-3057.

\bibitem{Erdos}
P. Erd\H os, A.L. Rubin, H. Taylor, 
Choosability in graphs, in: 
Proceedings, West Coast Conference on Combinatorics, 
Graph Theory and Computing, Arcata,
CA., Sept. 5-7, in: Congr. Numer., vol. 26, 1979.

\bibitem{Far}
B. Farzad, Planar graphs without $7$-cycles are $4$-choosable, 
SIAM J. Discrete Math. 23(2009) 1179-1199.

\bibitem{Fi}
G. Fijav\v z, M. Juvan, B. Mohar, R. \v Skrekovski, 
Planar graphs without cycles of specific lengths, European J. Combin. 23(2002) 377-388.

\bibitem{Gut}
S. Gutner, The complexity of planar graph choosability, 
Discrete Math. 159(1996) 119-130.

\bibitem{Hu}
D.Q. Hu, J.L. Wu, Planar graphs without intersecting $5$-cycles are $4$-choosable, 
Discrete Math. 340(2017) 1788-1792. 


\bibitem{Lam2} 
P.C.B. Lam, B. Xu, J. Liu, The $4$-choosability of plane graphs without $4$-cycles, J. Combin. Theory Ser. B 76(1999) 117-126.

\bibitem{Tho}
C. Thomassen, Every planar graph is $5$-choosable, 
J. Combin. Theory Ser. B 62(1994) 180-181.

\bibitem{Vizing} 
V.G. Vizing, Vertex colorings with given colors, 
Metody Diskret. Analiz. 29(1976) 3-10 (in Russian).

\bibitem{Vo1}
M. Voigt, List colourings of planar graphs, Discrete Math. 120(1993) 215-219.

\bibitem{Xu} 
R. Xu, J.L. Wu, 
A sufficient condition for a planar graph to be $4$-choosable, 
Discrete App. Math. 224(2017)120-122.

\bibitem{Wang1}
W. Wang, K.W. Lih, 
Choosability and edge choosability of planar graphs without five cycles, 
Appl. Math. Lett. 15(2002) 561-565.

\bibitem{Wang2} 
W. Wang, K.W. Lih, 
Choosability and edge choosability of planar graphs without intersecting triangles, 
SIAM J. Discrete Math. 15(2002) 538-545.



\end{thebibliography}
\end{document}